\documentclass[12pt,a4paper]{article}
\usepackage[utf8]{inputenc}
\usepackage[english]{babel}
\usepackage{indentfirst}
\usepackage{misccorr}
\usepackage{graphicx}
\usepackage{amsmath,amssymb,url,xcolor}
\usepackage{amsthm}
\usepackage{scrextend}
\usepackage[T2A]{fontenc}
\usepackage{mathtools}
\usepackage[colorlinks=true,linkcolor=blue,urlcolor=red,unicode=true,hyperfootnotes=false,bookmarksnumbered]{hyperref}

\newcommand{\ff}{\mathcal{F}}
\newcommand{\cG}{\mathcal{G}}
\newcommand{\cA}{\mathcal{A}}
\newcommand{\bb}{\mathcal{B}}

\newtheorem{Theorem}{Theorem}
\newtheorem{Claim}[Theorem]{Claim}
\newtheorem{Lemma}[Theorem]{Lemma}

\title{On supersaturation in the Erd\H{o}s--S\'os problem}
\author{Andrey Kupavskii\thanks{Moscow Institute of Physics and Technology, Saint-Petersburg State University, Innopolis University;
E-mail: \url{kupavskii@ya.ru}}, Yakov Shubin\thanks{Moscow Institute of Physics and Technology;
E-mail: \url{shubin.yakoff@gmail.com}}}

\begin{document}
\maketitle
\begin{abstract}
    The following classical question in extremal set theory is due to Erd\H os and S\'os: what is the size of the largest family $\ff\subset {[n]\choose k}$ with no two sets $F_1,F_2\in \ff$ such that $|F_1\cap F_2| = t$? In this paper, we address a supersaturation question for this extremal function. For a family $\ff\subset {[n]\choose k}$ of a fixed size $\ell$, what is the smallest number of pairs $F_1,F_2\in \ff$ with $|F_1\cap F_2|=t$ it may induce? For  fixed $k$ and $n\to \infty$, we find the exact threshold when the minimum number of pairs matches the expected number of pairs in a random $\ell$-element family up to a constant factor. We also find an exact answer for $\ell$ slightly above the extremal function. 
\end{abstract}
\section{Introduction}
For a positive integer $n$, denote $[n] = \{1,2,\ldots, n\}$. Given a set $X$ and an integer $k \geqslant 0$, denote by $2^X$ and $\binom{X}{k}$ the collection of all subsets and all $k$-element subsets ($k$-sets) of $X$, respectively. A {\it family} is simply a collection of sets. 

One of the classical topics in extremal set theory is the study of systems with restricted intersections. It was initiated with the paper of Erd\H os, Ko and Rado \cite{EKR}. For all $n,k$ they determined the size of the largest family $\ff\subset {[n]\choose k}$ that is {\it intersecting}, i.e., for any two sets $A,B\in \ff$ we have $A\cap B=\emptyset$.  We may also say that such $\ff$ {\it avoids intersections of size $0$}. Erd\H os, Ko and Rado also determined for $n>n_0(k,t)$ the largest $t$-intersecting families in ${[n]\choose k}$, i.e., families $\ff$ with the condition $|A\cap B|\ge t$ for any $A,B\in\ff$. They also posed several questions in that regard. Answering one of those, Katona~\cite{Kat} determined the size of the largest $t$-intersecting subfamily of $2^{[n]},$ for all $n,t.$ The uniform case of the problem, however, turned out to be much harder. Frankl \cite{F1} and Wilson \cite{W} determined the size of the largest $t$-intersecting subfamily of ${[n]\choose k}$ for $n\ge (t+1)(k-t+1)$, which is exactly the range when the answer to the question is ${n-t\choose k-t}.$ After a partial result of Frankl and F\"uredi \cite{FF0}, Ahlswede and Khachatrian~\cite{AK} confirmed the conjecture of Frankl~\cite{F1} and found the exact solution to the $t$-intersecting problem for all $n,k,t.$

Erd\H{o}s and S\'os~\cite{ES} suggested a variant with a weaker condition: what is the largest family $\mathcal{F} \subset\binom{[n]}{k}$ that {\it avoids intersection $t$}, i.e. such that $|A \cap B| \neq t$ for any $A, B \in \mathcal{F}$? More generally, Deza, Erd\H os and Frankl~\cite{DEF} studied $(n,k,L)$-systems, i.e., families  $\ff\subset {[n]\choose k}$ such that for any two sets $F_1,F_2\in \ff$ we have $|F_1\cap F_2|\in L.$ They introduced the Delta-system method, which was later developed largely in connection with the study of $(n,k,L)$-systems. We refer to a recent survey of the first author \cite{Kupdel} on the Delta-system method. 
We should also mention the results of Ray-Chaudhuri and Wilson~\cite{RW}, as well as the result of Frankl and Wilson~\cite{FW}, which give bounds for the sizes of $(n,k,L)$-systems via the linear-algebraic method.

Returning to the Erd\H os--S\'os problem, the answer to this question is not known in general. One of the achievements of the Delta-system method was an almost complete resolution of this question for $n>n_0(k)$ by Frankl and Furedi~\cite{FF}. Their methods play a very important role in this paper. It would be convenient for us to formulate their result in graph terms. Let $G(n,k,t)$ be the Generalized Johnson graph with $V(G(n,k,t)) = \binom{[n]}{k}$ and $$E(G(n,k,t)) = \left\{ \{F_1, F_2\}: |F_1 \cap F_2| =t, F_1, F_2 \in \binom{[n]}{k} \right\}.$$
In these terms, the Erd\H os--S\'os problem asks to determine the independence number $\alpha(G(n,k,t))$ of $G(n,k,t).$ 
\begin{Theorem}[Frankl and F\"uredi, \cite{FF}]\label{thmff}
(i) Fix positive integers $k,t$ with $k > 2t+1$. There exists $n_0$ such that for $n > n_0$
$$\alpha(G(n,k,t)) = \binom{n-t-1}{k-t-1} \sim \cfrac{n^{k-t-1}}{(k-t-1)!}.$$
(ii) Fix positive integers $k,t$ with $k \leqslant 2t+1$. Then
$$\alpha(G(n,k,t)) = \Theta (n^t).$$
\end{Theorem}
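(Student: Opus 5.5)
This is the Frankl--F\"uredi theorem, and I would follow the Delta-system method they introduced. The lower bounds come from constructions. For (i), take all $k$-sets containing the fixed $(t+1)$-set $[t+1]$. Any two such sets meet in at least $t+1$ elements, so this family is independent and has size $\binom{n-t-1}{k-t-1}$. For (ii), when $k\le 2t+1$, fix a set $Y$ with $|Y|=k-t-1$ and consider the sets $Y\cup Z$ with $Z$ ranging over a packing of $(t+1)$-sets from $[n]\setminus Y$ that pairwise meet in at most $t-1$ elements... this is awkward. A cleaner choice is all $k$-sets $F$ with $|F\cap[2k]|\ge ?$. The simplest standard route for $\Omega(n^t)$ is to take a design-like family: sets of the form $A\cup B$ with $A\in\binom{[n]}{t}$ drawn from a partial Steiner system in which distinct blocks share at most $t-1$ points, padded inside a fixed small set. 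I would verify that two members meet in fewer than $t$ elements or in a large forced core. I would settle the exact construction once the upper bound argument shows which kernels are allowed.

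For the upper bounds I would use the Füredi structure theorem, taken as a black box from the Delta-system literature. It says that $\ff$ contains a subfamily $\ff^*$ with $|\ff^*|\ge c(k)|\ff|$ that is $k$-partite on a partition $X_1,\dots,X_k$. In this subfamily, every $F\in\ff^*$ and every proper subset $A\subsetneq F$ of each intersection pattern that occurs, $A$ is the kernel of a sunflower with $k+1$ petals in $\ff^*$. Moreover, the family $J$ of intersection patterns is closed under intersection.

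Because $\ff$ avoids intersection $t$, no pattern in $J$ has size $t$. Sunflower petals then show that a kernel of size $t$ inside $F$ would produce two members meeting in exactly $t$ elements. A second argument is needed: if patterns $A,B$ satisfy $|A|<t<|B|$, one can take a superset from sunflower petals and adjust to realize intersection exactly $t$. Carrying this through, the rank argument gives the following. If $k>2t+1$, every $F$ has a $(t+1)$-subset that is not in the "free" part, so the family behaves like a family of degree-bounded $(k-t-1)$-dimensional objects and has size $O(n^{k-t-1})$. If $k\le 2t+1$, the bound is $O(n^t)$.

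The main obstacle is the exact value in (i). The Delta-system step only yields $|\ff|\le (1+o(1))\binom{n-t-1}{k-t-1}$ along with stability: almost all members contain a common $(t+1)$-set $T$. From there I would run a stability cleanup. Any $F\not\supset T$ meets many sets containing $T$ in exactly $t$ elements, since there are $\Omega(n^{k-t-1})$ sets through $T$ and $F$ avoids only a vanishing fraction of them when $n>n_0$. Counting forbidden pairs then shows that adding such an $F$ costs more than it gains. Making this counting tight is where I expect the real work to lie.
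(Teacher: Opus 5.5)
The paper gives no proof of this statement. It is quoted from Frankl and F\"uredi and used as a black box, as are Lemma~\ref{lem9} and Theorem~\ref{thm10}. So your sketch has to stand on its own, and it has genuine gaps.

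\textbf{Upper bounds.} These hinge on a lemma about the intersection structure that you never formulate. If $\mathcal J=\operatorname{Int}(\ff^*)$ is closed under intersection and has no $t$-element member, one needs some $R\subset[k]$ with $|R|\le\max\{t,k-t-1\}$ not covered by $\mathcal J\setminus\{[k]\}$. Then, exactly as in Claim~\ref{cla11}, $F\mapsto F_R$ is injective on $\ff^*$. This gives $|\ff|\le c(k,s)^{-1}\binom{n}{|R|}$, which is $O(n^{k-t-1})$ for $k\ge 2t+1$ and $O(n^t)$ for $k\le 2t+1$. For $k\ge 2t+3$ this is exactly what Lemma~\ref{lem9} delivers: case (a) is excluded, and in case (b) the set $[k]\setminus M$ is uncovered.

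What you put in its place is false, because patterns of size $<t$ and $>t$ do coexist in $t$-avoiding structured families. A union of many $(t+1)$-stars on pairwise disjoint ground sets is $t$-avoiding and can be made to satisfy the conclusions of Theorem~\ref{thm7}. Its patterns are $\emptyset$ together with all proper supersets of the center, which is precisely the situation of Lemma~\ref{lem9}(b). So no adjustment of petals can produce an intersection of size $t$, and the remark about $(t+1)$-subsets outside a ``free part'' gives no bound.

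\textbf{Lower bound in (ii).} This is also left open. With $B$ a fixed $(k-t)$-set, $A\cup B$ and $A'\cup B$ meet in exactly $t$ points whenever $|A\cap A'|=2t-k$, which happens for $k\le 2t$. The standard construction, mentioned in the paper's introduction, is a greedy packing of $k$-sets with pairwise intersections at most $t-1$. Each $k$-set meets only $O(n^{k-t})$ others in at least $t$ points, so this yields $\Omega(n^t)$ sets.

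\textbf{Exact value in (i).} One application of F\"uredi's theorem gives only $|\ff|=O(n^{k-t-1})$, with the lossy factor $1/c(k,s)$. Neither $(1+o(1))\binom{n-t-1}{k-t-1}$ nor stability follows from it. Getting there needs several further ingredients, as in Section~\ref{sec41}:
the iterated decomposition into structured pieces;
the centers supplied by Lemma~\ref{lem9}(b);
the disjointness of the $t$-shadows of links with different centers (Claim~\ref{cla13});
a Kruskal--Katona computation showing that one $(t+1)$-set $T$ carries almost all of $\ff$.
As quoted, Lemma~\ref{lem9} needs $k\ge 2t+3$, so the case $k=2t+2$ needs extra care.

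\textbf{Cleanup step.} Put $\cA=\ff[T]$, $\bb=\ff\setminus\cA$, and let $\cA'$ be the $k$-sets through $T$ missing from $\ff$. Your cleanup fails as stated. A set $B\in\ff$ with $B\cap T=\emptyset$ does force the $\binom kt d$ sets $A\supset T$ with $|A\cap B|=t$ to be missing, where $d=\binom{n-k-t-1}{k-2t-1}$. But a single missing $A$ can be forced by very many $B$'s. Those meeting $A$ exactly in a fixed $t$-set $D$ need only form an intersecting family after removing $D$. So per-set counting cannot give $|\bb|\le|\cA'|$.

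What works is double counting over $t$-shadows, as in Claim~\ref{cla16}. With $\bb_0=\{B\in\bb: B\cap T=\emptyset\}$, one gets $|\cA'|\ge|\partial^{(t)}(\bb_0)|\,d/\binom kt$. Combine this with the fact that $\bb_0$ is itself $t$-avoiding: Theorem~\ref{thm10} and Kruskal--Katona turn $|\bb_0|=o(n^{k-t-1})$ into $|\partial^{(t)}(\bb_0)|\,d\gg|\bb_0|$. Sets meeting $T$ in $j\ge 1$ points are handled by the same count with $(t-j)$-shadows. This quantitative use of $t$-avoidance inside $\bb$ is the missing idea that yields the exact bound.
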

Theorem~\ref{thmff} has two regimes. The extremal example in the first case is the same as in the Frankl~\cite{F1} and Wilson~\cite{W} result: the family of all sets containing some fixed $t+1$ elements. That is, the extremal family is $(t+1)$-intersecting. In the second regime, the example is supposedly design-like (or Steiner system-like). E.g., the largest family of $k$-sets with pairwise intersections at most $t-1$ has size $\Omega(n^t)$ and avoids intersection $t$. The actual supposedly extremal example is a bit more complicated, but is still based on a Steiner system. Frankl and F\"uredi in \cite{FF} verified that this example is indeed extremal in some cases via linear algebra.   

More recently, new methods were introduced to the study of the Erd\H os--S\'os problem, including the junta method, hypercontractivity and spread approximations. See the recent papers by Keller and Lifshitz~\cite{KL},  Ellis, Keller and Lifshitz~\cite{EKL}, Kupavskii and Zakharov~\cite{KZ}, Kupavskii and Noskov~\cite{KN}. All of them address the first  regime, resolving the problem under different dependencies between the parameters $n,k,t$.

A popular theme in extremal combinatorics are supersaturation questions, which could be vaguely formulated as follows: once the object under study is past the extremal threshold, how many copies of the forbidden structure would it contain? One notable example is the asymptotically sharp supersaturation result of Razborov \cite{Raz}, then upgraded to an exact result by Liu, Pikhurko, and Staden \cite{LPS}, which gives the minimum number of triangles in an $n$-vertex graph with a fixed number of edges.

This paper is devoted to supersaturation for the Erd\H os--S\'os problem, which  may be conveniently formulated in terms of the Generalized Johnson graph: \begin{quote}
    for a fixed number $\ell$, what is the minimum number $\rho(\ell)$ of edges in an induced subgraph of $G(n,k,t)$ on $\ell$ vertices?
    \end{quote}
Note that, by definition, $\rho(\ell)=0$ for $\ell\le \alpha(G(n,k,t)).$ We may formulate the question directly in terms of set families. For a family $\mathcal{F} \subset\binom{[n]}{k}$, let $\rho(\mathcal{F})$ be the number of pairs $F_1, F_2$ such that $F_1, F_2 \in \mathcal{F}$ and $|F_1 \cap F_2| =t.$ Then
$$\rho(\ell) = \min_{|\mathcal{F}|=\ell, \mathcal{F} \subset\binom{[n]}{k}} \rho(\mathcal{F}).$$

One case of this problem that is particularly well-studied is supersaturation in Kneser graphs, i.e., graphs $G(n,k,0),$ as well as its analogue for $2^{[n]}.$ The Erd\H os--Ko--Rado Theorem states that, for $n\ge 2k$, $\alpha(G(n,k,0))={n-1\choose k-1}.$ Several groups of researchers showed that lexicographic families have the smallest  or asymptotically smallest number of disjoint pairs in different regimes of the parameters (the three parameters being $n,k,s$, where $s\sim |\ff|/{n-1\choose k-1}$): see Poljak and Tuza \cite{PT}, Das, Gan and Sudakov \cite{DGS}, Frankl, Kohayakawa and R\"odl \cite{FKR}, Balogh, Das Liu, Sharifzadeh, and Tran \cite{BDLST}. The question itself was posed by Ahlswede \cite{Ahl}. The non-uniform variant of this question was independently resolved by Frankl \cite{Fr17} and by Ahlswede \cite{Ahl}. Bollob\'as and Leader \cite{BL} reproved the result of \cite{Fr17, Ahl} and also proved an exact result concerning $t$-disjoint pairs (pairs with intersection $< t$), albeit for sizes of the form $\sum_{i=0}^k{n\choose i}$ only. An example in the paper by Frankl \cite{Fr17} shows that the answer to the question for arbitrary sizes can be more complicated than what one may expect. Supersaturation for $t$-disjoint pairs in ${[n]\choose k}$ was treated in \cite{DGS} and then in \cite{FKR}.

Let us briefly comment on the differences between the problem of determining $\rho(\ell)$ and its `monotone' counterpart of determining the minimum number of $t$-disjoint pairs for a family $\ff\subset {[n]\choose k}$ of size $\ell.$

Frankl, Kohayakawa and R\"odl \cite{FKR} found an elegant and rather simple way of getting approximate results on supersaturation for disjoint pairs or $t$-disjoint pairs in ${[n]\choose k}$ (i.e., edges in induced subgraphs of the graph $G(n,k,< t)$) for moderately large $n$. Let us treat the case of disjoint pairs and a family $\ff$ of size $s{n-1\choose k-1}$. Take a random perfect matching $\mathcal M$ in ${[n]\choose k}$. Its expected intersection with $|\ff|$ has size $s$, and thus gives rise to at least $s^2/2$ disjoint pairs on average (here, we are using Jensen's inequality for the function ${x\choose 2}$). At the same time, the probability that an edge `survives' when intersected with a random matching is the probability that both endpoints survive, which is roughly $|\mathcal M|^2/{n\choose k}^2 = {n-1\choose k-1}^{-2}.$ By linearity of expectation, we conclude that $\ff$ induces at least $\frac{s^2}2{n-1\choose k-1}^2$ edges. For integer $s$ and $n\gg k^2,$ this is the correct asymptotics. A similar argument, but with a Steiner system replacing a perfect matching works for $t$-disjoint pairs. There are also other tools available for Kneser graphs and graphs $G(n,k,< t)$, such as Hoffman/expander mixing-type bounds. They also give asymptotically tight bounds on the number of edges in vertex sets beyond the independence number.

The situation for pairs with intersection exactly $t$, i.e., for graphs $G(n,k,t)$ is much more complicated, in particular because neither Steiner system-type nor spectral-type argument is available. The behavior of the function $\rho(\ell)$ is also very different. For $t$-disjoint pairs and families of size $\gg {n-t\choose k-t}$ {\it most} pairs were $t$-disjoint, but this is far from being the case for the pairs with intersection exactly $t$ (and families of size $\gg {n-t-1\choose k-t-1}$). Lastly, there is of course the difficulty that we do not even know the independence number of $G(n,k,t)$ in general. Indeed, some previous works (e.g., \cite{YS}, \cite{DNRS}, \cite{Push}) treated the problem of determining $\rho(\ell),$ but could not determine the correct order of magnitude in general. For the most part, these works studied the case $k=3, t=1$.  Even in this specific setting, the exact multiplicative constant for $\ell = \Theta(n^2)$ remained unknown. For  any constant $k,t$ and $n\to \infty$  the correct order of magnitude was determined for the case $\ell \gg n^{k-2}$ \cite{YS2}. In the paper \cite{DNRS2}, the authors studied a related problem of the minimum number of induced cliques in subgraphs of $G(n,k,t)$.

In this paper, we managed to determine the order of magnitude of the function $\rho(\ell)$ for almost any regime, provided $k,t$ are fixed and $n\to\infty.$ In some cases, we even determined the exact value of $\rho(\ell).$

\subsection{Results}

The main results of this paper are the lower bounds on $\rho(\ell)$. We, however, start with the upper bounds in order to put the lower bounds into context. 

The following easy upper bound follows from averaging: it gives the average number of edges induced on a random subset of vertices of $G(n,k,t)$.
\begin{Claim}\label{cla2}
For each $\ell > \alpha(G(n,k,t))$ we have
$$
\rho(\ell) \leqslant(1+o(1)) \cdot \frac{\ell^{2}}{n^{t}} \cdot \frac{t!}{2} \cdot \binom{k}{t}^2.
$$
\end{Claim}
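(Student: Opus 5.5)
We bound the minimum by an average: choose $\ff$ uniformly at random among all $\ell$-element subfamilies of ${[n]\choose k}$ and compute $\mathbb E\rho(\ff)$. Some family then satisfies $\rho(\ff)\le \mathbb E\rho(\ff)$, so $\rho(\ell)\le \mathbb E\rho(\ff)$.

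Write $N={n\choose k}$, and let $D={k\choose t}{n-k\choose k-t}$ be the degree of $G(n,k,t)$, which is regular. Then $|E(G(n,k,t))|=ND/2$. A fixed pair of distinct vertices lies in a uniformly random $\ell$-subset with probability $\frac{\ell(\ell-1)}{N(N-1)}$. By linearity of expectation,
$$\mathbb E\rho(\ff)=\frac{ND}{2}\cdot\frac{\ell(\ell-1)}{N(N-1)}\le \frac{\ell^2 D}{2N}.$$
The last inequality uses $\frac{\ell-1}{N-1}\le \frac{\ell}{N}$, which holds since $\ell\le N$.

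It remains to compute $D/N$ asymptotically for fixed $k,t$ and $n\to\infty$:
$$\frac{D}{N}={k\choose t}\frac{{n-k\choose k-t}}{{n\choose k}}=(1+o(1)){k\choose t}\frac{n^{k-t}/(k-t)!}{n^k/k!}=(1+o(1)){k\choose t}^2\frac{t!}{n^t}.$$
Here we used $\frac{k!}{(k-t)!}={k\choose t}t!$. Substituting this gives exactly the bound of Claim~\ref{cla2}.

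There is no real obstacle. The condition $\ell>\alpha(G(n,k,t))$ only serves to make the bound meaningful, since $\rho(\ell)=0$ below that point. The one point of care is that the $o(1)$ term comes solely from the ratio $D/N$ and is uniform in $\ell$.
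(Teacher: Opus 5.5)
Your proof is correct and follows the paper's argument. Both average the edge count over a uniformly random $\ell$-vertex induced subgraph of the regular graph $G(n,k,t)$ by linearity of expectation and then compute the asymptotics of $|E|/\binom{n}{k}^2$; your use of the inequality $\frac{\ell(\ell-1)}{N(N-1)}\le\frac{\ell^2}{N^2}$ instead of an asymptotic equality is a minor tidying that makes the $o(1)$ term depend only on $n$.
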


\begin{proof}
    For convenience, denote $G=G(n,k,t)$ and $V = V(G)$, $E = E(G)$. Note that $|V| = \binom{n}{k}$ and 
    $$|E| = \frac{|V|\binom{k}{t}\binom{n-k}{k-t}}{2} =(1+o(1)) \frac{|V|^2\binom{k}{t}\prod_{i=0}^{k-t+1}(k-i)}{2 n^t}=(1+o(1)) \frac{|V|^2\binom{k}{t}^2}{2 n^t t!}.$$
    Let us choose a uniformly random induced subgraph $H$ of $G(n,k,t)$ on $\ell$ vertices. The probability of any given edge to fall into $H$ is ${|V|-2\choose \ell-2}/{|V|\choose \ell}$. By linearity of expectation, the expected number of edges in $H$ is thus equal to 
    $$|E|\frac{\binom{|V|-2}{l-2}}{\binom{|V|}{\ell}}=|E|\frac{\ell(\ell-1)}{|V|(|V|-1)} =(1+o(1)) \frac{\ell^{2}}{n^{t}} \cdot \frac{t!}{2} \cdot \binom{k}{t}^2.$$
    Take any subgraph which induces at most that many edges. The claim follows.
\end{proof}

In the case $n^{k-1} = o(\ell)$ Claim 2 gives us an asymptotically tight bound, see \cite{YS}. However, in most other regimes we can improve upon this bound. For instance, the following bound was shown in \cite{YS} via an explicit construction.

\begin{Claim}\label{cla3}
Fix positive integers $k,t$ such that $k \geqslant 2t+1$. Let $n^{k-t-1} = o(\ell)$ and $\ell = o (n^{k-t})$. Then
$$
\rho(\ell) \leqslant(1+o(1)) \cdot \frac{\ell^{2}}{n^{t}} \cdot \frac{t!}{2} \cdot \binom{k-t-1}{t}^2.
$$
\end{Claim}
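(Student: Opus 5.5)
Our plan is to exhibit an explicit family. Note first that $\binom{k-t-1}{t}\ge 1$ requires $k-t-1\ge t$, which is exactly the hypothesis $k\ge 2t+1$. We build the family out of the extremal configuration of Theorem~\ref{thmff}(i). Fix a set $T=[t+1]$. Choose a family $\cG\subset\binom{[n]\setminus T}{k-t-1}$ of size $m=\ell$ (up to rounding), and put
$$\ff=\{T\cup G: G\in\cG\}.$$
Every two members of $\ff$ share $T$. Hence $|F_1\cap F_2|=t+1+|G_1\cap G_2|\ge t+1$, and $\ff$ would have no edges at all. But $\ell\gg n^{k-t-1}\ge \binom{n-t-1}{k-t-1}$, so this is impossible. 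We therefore need many "stars", and the edges come from pairs in different stars.

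The construction we propose is as follows. Let $s=\lceil \ell/\binom{n}{k-t-1}\rceil$, or more precisely the number of blocks needed. Since $n^{k-t-1}=o(\ell)=o(n^{k-t})$, we have $s\to\infty$ and $s=o(n)$. Take disjoint $(t+1)$-sets $T_1,\dots,T_s$ inside a set $Y$ of size $(t+1)s=o(n)$, and set $X=[n]\setminus Y$. Let
$$\ff=\bigcup_{i=1}^{s}\Bigl\{T_i\cup G: G\in\cG_i\Bigr\},$$
where each $\cG_i$ is a uniformly random subfamily of $\binom{X}{k-t-1}$ of size $\ell/s$. This size is $\Theta(n^{k-t-1})$, a positive fraction of $\binom{X}{k-t-1}$, and we choose $s$ so that the fraction is $o(1)$ if possible. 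Within a block all intersections are at least $t+1$, so there are no edges. For $i\ne j$, we have $|(T_i\cup G)\cap(T_j\cup G')|=|G\cap G'|$, so an edge is exactly a pair with $|G\cap G'|=t$.

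Next we count the expected number of edges between blocks. In $\binom{X}{k-t-1}$, the density of pairs with intersection exactly $t$ is $\binom{k-t-1}{t}\binom{n}{k-2t-1}/\binom{n}{k-t-1}\sim \binom{k-t-1}{t}^2 t!/n^t$, by the same computation as in Claim~\ref{cla2} with $k$ replaced by $k-t-1$. Since there are $\binom{s}{2}(\ell/s)^2\sim \ell^2/2$ cross pairs, the expected number of edges is
$$(1+o(1))\frac{\ell^2}{n^t}\cdot\frac{t!}{2}\binom{k-t-1}{t}^2.$$
Some realization attains at most the expectation, which gives the claimed bound.

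The main obstacle is the bookkeeping of $s$ against block size. Each block can hold at most $\binom{n-(t+1)s}{k-t-1}\sim n^{k-t-1}/(k-t-1)!$ sets, so $s\ge (1+o(1))(k-t-1)!\,\ell/n^{k-t-1}\to\infty$. We also need $(t+1)s=o(n)$, which holds because $\ell=o(n^{k-t})$. The uniformly random subfamilies make the density computation exact in expectation regardless of their fraction, so any admissible block size works. Rounding, namely unequal block sizes, only affects $\sum_{i<j}|\cG_i||\cG_j|\le \ell^2/2$, which is harmless for an upper bound. We also check that $\ell>\alpha$, which is automatic since $\ell\gg n^{k-t-1}$.
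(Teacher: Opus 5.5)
Your construction and count are correct, and they are essentially what the paper has in mind. The paper gives no proof here and refers to an explicit construction from the literature: unions of $(t+1)$-stars with disjoint centers, the same device it uses in Section~\ref{sec42}, where edges arise only between stars through $t$-intersections of the $(k-t-1)$-element links, which is exactly where the constant $\binom{k-t-1}{t}^2$ comes from. The randomness is only a convenience: $s-1$ full stars on $X$ plus one partial star already give the bound, since every link set has exactly $\binom{k-t-1}{t}\binom{|X|-k+t+1}{k-2t-1}$ neighbours in each full star. If you keep it, take the $\cG_i$ independent (this matters when $k=2t+1$, where $G=G'$ counts as a $t$-intersection), and fix $s$ a constant factor above $(k-t-1)!\,\ell/n^{k-t-1}$ so that the blocks fit.
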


We now turn to the upper bounds. We know the order of magnitude of $\alpha = \alpha(G(n,k,t))$, and thus we can apply Turan's Theorem \cite{TT}. It gives us the bound
$\rho(\ell) \geqslant \frac{\alpha}{2}\left[\frac{\ell}{\alpha}\right](\left[\frac{\ell}{\alpha}\right]-1).$ It is easy to see that in the case $k \leqslant 2t+1$ the lower bound on $\rho(\ell)$ from Turan's Theorem is of the form  $\Theta(\frac{\ell^2}{n^t})$, which matches  the upper bounds from Claims~\ref{cla2} and~\ref{cla3} up to a constant factor. In the case $k > 2t+1$, however, the exponent of $n$ does not match.

In Section~\ref{sec3}, we prove the following result.

\begin{Theorem}\label{thm4}
Fix positive integers $k$, $t$ such that $k \geqslant 2t+1$ and a function $\ell(n)$ satisfying $n^{k-t-1} = o(\ell)$. Then
$$\rho(\ell) \geqslant (1+o(1)) \cdot \frac{\ell^{2}}{n^{t}} \cdot \frac{t!}{2}.$$
\end{Theorem}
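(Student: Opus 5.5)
The plan is to assign to every $F\in\ff$ a single ``representative'' $t$-subset $T(F)\subset F$, and then count pairs $F_1,F_2$ with $T(F_1)=T(F_2)=T$ and $F_1\cap F_2=T$ exactly. If $m_T$ denotes the number of $F$ with $T(F)=T$, then $\sum_T m_T=\ell$. By Cauchy--Schwarz, $\sum_T \binom{m_T}{2}\ge (1+o(1))\frac{\ell^2}{2\binom{n}{t}}=(1+o(1))\frac{\ell^2}{n^t}\cdot\frac{t!}{2}$, provided $\ell\gg n^t$, which holds since $k-t-1\ge t$. This is exactly the claimed bound. So it suffices to choose $T(\cdot)$ so that, for each $T$, all but $o(\cdot)$ of the $\binom{m_T}{2}$ pairs inside the class of $T$ intersect in exactly $T$. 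The total error must be $o(\ell^2/n^t)$ after summing over $T$.

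\emph{Step 1 (heaviness).} Fix a small $\delta>0$, later sent to $0$ slowly. Call a set $S$ with $t<|S|<k$ \emph{heavy} if $|\{F\in\ff: S\subset F\}|\ge \delta\,\ell\, n^{-(|S|-t)}\cdot n^{t}/\binom{n}{t}$. Roughly, $S$ is heavy if it has at least a $\delta$-fraction of the degree a $t$-set would have on average, normalised by the extra $|S|-t$ coordinates; otherwise $S$ is \emph{light}. We choose $T(F)$ to be a $t$-subset of $F$ such that every $S$ with $T\subsetneq S\subset F$ is light, or at least such that every $S$ with $T\subsetneq S$ meeting the other members of the class is light.

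\emph{Step 2 (error estimate).} Within the class of $T$, a ``bad'' pair $F_1,F_2$ has $F_1\cap F_2=S\supsetneq T$. The number of such pairs is at most $\sum_{S\supsetneq T} d_T(S)^2$, where $d_T(S)$ counts the members of the class containing $S$. Using lightness, $d_T(S)\le \delta\cdot(\text{average})\cdot n^{-(|S|-t)}$. Also $\sum_S d_T(S)\le 2^k m_T$. Hence the bad pairs number at most $O(\delta)\,m_T\cdot \ell/n^t$. Summing over $T$ gives $O(\delta)\,\ell^2/n^t$, which is negligible. The hypothesis $n^{k-t-1}=o(\ell)$ enters here: it makes the light threshold for $(t+1)$-sets, of order $\delta\ell/n^{t+1}$, go to infinity relative to trivial bounds. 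It also allows us to discard sets containing no good $T$ when their number is $o(\ell)$.

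\emph{Step 3 (existence of a good $T$; the main obstacle).} We must show that all but $o(\ell)$ sets $F$ contain a $t$-set $T$ none of whose proper supersets inside $F$ are heavy. This is where $k\ge 2t+1$ is used, in the spirit of the Frankl--F\"uredi Delta-system argument behind Theorem~\ref{thmff}(i). The heavy sets inside $F$ form an up-closed family of subsets of $F$. I would first try to show, by the standard kernel/``rank'' argument for the family of heavy sets, that their minimal elements inside a typical $F$ cannot cover every $t$-subset's extensions. If some heavy $(t+1)$-set $S\subset F$ exists, then $|F\setminus S|=k-t-1\ge t$, so one may try to take $T\subset F\setminus S$, or more generally $T$ avoiding the heavy kernel. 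I would then bound the number of $F$ for which the heavy subsets of $F$ ``pierce'' all admissible $T$. This uses the bound on the total number of heavy sets, at most $2^k\ell/(\text{threshold})$, together with $\ell\gg n^{k-t-1}$, to show that such $F$ number only $o(\ell)$ or contribute only lower-order terms. If this clean dichotomy fails, the fallback is to allow a weighted choice: each $F$ distributes weight $1$ among its good $t$-subsets, and the Cauchy--Schwarz step is applied to the weights. The bound is unchanged as long as the total weight is $(1-o(1))\ell$.
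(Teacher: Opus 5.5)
The gap is Step~3. It carries all the content of the theorem, it is only a plan, and with an absolute heaviness threshold its claim is false.

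Your normalisation is also inconsistent. The Step~1 formula gives a $(t+1)$-threshold of about $\delta\, t!\,\ell/n$. Step~2 speaks of $\delta\ell/n^{t+1}$. The conclusion $O(\delta)m_T\ell/n^t$ of Step~2 needs about $\delta\ell/n^t$. At $\delta\ell/n^{t+1}$, essentially every $(t+1)$-subset of a member of a random family is heavy. At $\delta\, t!\,\ell/n$, Step~2 only bounds the bad pairs by $O(\delta\ell^2/n)$, which is not $o(\ell^2/n^t)$ for $t\ge 2$.

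So take the threshold $\delta\ell/n^t$, and let $\ff=\{F\in\binom{[n]}{k}: T_0\subset F\}$ with $|T_0|=t$. Then $\ell=\binom{n-t}{k-t}\gg n^{k-t-1}$.
\begin{itemize}
\item A $(t+1)$-set meeting $T_0$ in $i\ge 1$ elements lies in $\binom{n-2t-1+i}{k-2t-1+i}=\Omega(n^{k-2t})$ members of $\ff$. Since $\delta\ell/n^t=\Theta(\delta n^{k-2t})$, every such set is heavy.
\item For $F\in\ff$ and a $t$-set $T\subset F$: if $T$ meets $T_0$, every $T\cup\{x\}$ is heavy; if $T\cap T_0=\emptyset$, then $T\cup\{y\}$ with $y\in T_0$ is heavy.
\end{itemize}
Hence \emph{no} set of $\ff$ has a good $T$, and the weighted fallback has nothing to distribute. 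Yet $\ff$ spans $(1-o(1))\binom{\ell}{2}$ edges. For $t=1$ this is just $\{F:1\in F\}$: here $\{1,x\}$ has degree $\frac{k-1}{n-1}\ell$, and your literal formula equals $\delta\ell/n$, so it fails too.

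The correct representative here is $T_0$ itself. Its class has $\Theta(\ell^2/n)$ bad pairs. That is negligible against $\binom{m_{T_0}}{2}$, but not against the global error budget $o(\ell^2/n^t)$ of Step~2. So lightness must be measured relative to the class, as a diversity condition $d_T(T\cup\{x\})\le\delta m_T$, not by an absolute degree.

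Even with a relative condition, the existence of good representatives for almost all sets cannot come from counting heavy sets. Theorem~\ref{thm4} implies $\alpha(G(n,k,t))=O(n^{k-t-1})$: if $\alpha/n^{k-t-1}$ were unbounded, apply the theorem with $\ell=\alpha(G(n,k,t))$ along a subsequence, where $\rho(\ell)=0$. So Step~3 must contain an argument of Frankl--F\"uredi strength. It must also break down for the edgeless full $(t+1)$-star, of size $\binom{n-t-1}{k-t-1}$. Your uses of $\ell\gg n^{k-t-1}$ (a threshold tending to infinity, a count of heavy sets) cannot do this, and the $t$-star above defeats them.

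The missing ingredient is the paper's Delta-system step, together with the black-box bound on $\alpha$.
\begin{itemize}
\item While the remaining family has more than $\alpha(G(n,k,t))/c(k,s)$ sets, extract a piece $\ff_i$ by Theorem~\ref{thm8}.
\item Then $|\ff_i|>\alpha(G(n,k,t))$, so $\operatorname{Int}(\ff_i)$ contains a $t$-set $J_i$.
\item For $F\in\ff_i$, the choice $T(F):=F_{J_i}$ has $s$-diverse link $\ff_i(F_{J_i})$.
\end{itemize}
With this choice your scheme does work. A member of the class of $T$ has at most $(k-t)|\ff_j(T)|/s$ partners in the class from $\ff_j$ that share more than $T$, hence at most $(k-t)m_T/s$ in total. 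This gives $\rho(\ff)\ge\frac12\bigl(1-\frac{k-t}{s}\bigr)\sum_T m_T^2-\frac{\ell}{2}$. Cauchy--Schwarz with $\sum_T m_T\ge\ell-\alpha(G(n,k,t))/c(k,s)=(1-o(1))\ell$, followed by $s\to\infty$, finishes the proof.

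That would be a one-shot Cauchy--Schwarz alternative to the paper's proof; it is essentially Claim~\ref{cla12} run over all pieces at once. The paper instead uses Theorem~\ref{thm7} and repeatedly deletes at most $s\binom{n}{t}$ sunflower petals, which every remaining set sees at least $s-(k-t)$ times. As written, though, your proposal contains neither the regularisation nor any use of $\alpha(G(n,k,t))$, and the Step~3 it describes is false.
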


Theorem~\ref{thm4} gives us the correct order of magnitude. Moreover, in the case $k=2t+1$ it asymptotically matches the bound in Claim~\ref{cla3}.

Theorem~\ref{thm4}, however, does not cover the case $\ell = \Theta(n^{k-t-1}).$ For the next two results, it is convenient for us to parametrize $\ell(n) = \binom{n-t-1}{k-t-1} + r(n),$ where $r=r(n) > 0$ and $r= O(n^{k-t-1})$.

\begin{Theorem}\label{thm5}  Fix positive integers $k,t$ with $k \geqslant 2t+3$ and a function $\ell(n) = \binom{n-t-1}{k-t-1} + r(n)$ such that $r=r(n)$ is a positive integer-valued function.
\begin{itemize}
    \item[(i)]If $r=o(n^{k-t-1})$, then 
    $$\rho(\ell) = \Theta (r n^{k-2t-1}).$$
    \item[(ii)] If $r=\Theta(n^{k-t-1})$, then
    $$\rho(\ell) = \Theta (n^{2k-3t-2}).$$
\end{itemize}
\end{Theorem}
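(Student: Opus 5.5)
Both parts need an upper bound and a lower bound, which I would prove separately.

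\textbf{Upper bound.} I would use a concrete construction. Take the star $\mathcal{S}=\{F: T\subset F\}$ with $|T|=t+1$, which has $\binom{n-t-1}{k-t-1}$ members, and add $r$ sets $A$ with $|A\cap T|=t$. Fix $T'\subset T$ of size $t$, say $T'=T\setminus\{x\}$, and add sets of the form $T'\cup B$ with $x\notin B$ and $|B|=k-t$.

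\emph{Edges between the added sets and the star.} For $F=T\cup C\in\mathcal{S}$ we have $|F\cap A|=t+|C\cap B|$. This equals $t$ exactly when $C\cap B=\emptyset$, which happens for almost all $C$. That would give about $r n^{k-t-1}$ edges, which is too many. So the added sets must be arranged differently.

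\emph{Better choice.} Take added sets $A$ with $|A\cap T|=t-1+\ldots$; more precisely, choose $A$ with $|A\cap T|=j$ so that the count is right. For $F=T\cup C$ we get $|A\cap F|=j+|A\cap C|$, and we need $|A\cap C|=t-j$ with $|A\setminus T|=k-j$. The number of such $C$ is $\Theta(n^{k-t-1-(t-j)})=\Theta(n^{k-2t-1+j})$, which is minimized at $j=0$. So each added set sits disjoint from $T$ and has $\Theta(n^{k-2t-1})$ neighbours in the star, giving $\Theta(rn^{k-2t-1})$ edges to the star.

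\emph{Edges among the added sets.} I would choose the $r$ added sets inside $[n]\setminus T$ to be pairwise $(t+1)$-intersecting, for instance all containing a fixed $(t+1)$-set $T_2$ disjoint from $T$. This is possible for $r\le\binom{n-2t-2}{k-t-1}$, which gives (i). For (ii), with $r=\Theta(n^{k-t-1})$, use a bounded number of disjoint such stars. Edges between two stars number $O(n^{k-t-1}\cdot n^{k-2t-1})=O(n^{2k-3t-2})$, and the same bound holds for edges to $\mathcal{S}$. This matches $rn^{k-2t-1}$ with $r=\Theta(n^{k-t-1})$.

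\textbf{Lower bound.} The main obstacle is a stability version of Theorem~\ref{thmff}(i), proved by the Delta-system method of Frankl and F\"uredi: a family of size about $\binom{n-t-1}{k-t-1}$ with few $t$-intersections must be mostly a $(t+1)$-star. The plan has four steps.

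\emph{Step 1 (reduction to few edges).} If $\rho(\mathcal{F})\ge cr n^{k-2t-1}$ we are done, so assume $\mathcal{F}$ has few edges. Delete one endpoint of each edge to get an independent family $\mathcal{F}'$. Since $\rho(\mathcal{F})\ll rn^{k-2t-1}\le r n^{k-t-2}$, we have $|\mathcal{F}'|\ge\binom{n-t-1}{k-t-1}-o(n^{k-t-1})$. In case (i) this already uses $k\ge 2t+3$.

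\emph{Step 2 (stability).} Apply stability to the independent family $\mathcal{F}'$: all but $o(n^{k-t-1})$ of its sets contain a common $(t+1)$-set $T$. I would prove this via Frankl--F\"uredi kernels. Members not contained in a large sunflower-kernel structure are $O(n^{k-t-2})$, and the kernels have size at least $t+1$ and pairwise intersect in at least $t+1$ elements. For $k\ge 2t+3$ this forces a common $(t+1)$-set.

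\emph{Step 3 (counting edges).} Since $|\mathcal{F}|>\binom{n-t-1}{k-t-1}$, at least $r$ members of $\mathcal{F}$ do not contain $T$; write $\mathcal{F}_0$ for the star part. Each $A\not\supset T$ is $t$-intersecting with $\Omega(n^{k-2t-1+|A\cap T|})$ sets of the full star. I would check that this degree is at least $c n^{k-2t-1}$ even when $|A\cap T|=t$; then it is $\Theta(n^{k-t-1})$. Restricted to $\mathcal{F}_0$, which misses only $o(n^{k-t-1})$ sets of the star, these neighbourhoods still retain a constant fraction. This needs a counting argument using that the neighbourhood of $A$ is spread.

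\emph{Step 4 (making the count robust).} The delicate point is that $\mathcal{F}_0$ misses $o(n^{k-t-1})$ star sets, which could exceed $n^{k-2t-1}$. To handle this, I would average over $A$, or apply a Kruskal--Katona/shadow-type argument on the link of $T$. The link is $\{C\}\subset\binom{[n]\setminus T}{k-t-1}$, and the number of $C$ with $|C\cap A'|=t-j$ is spread across the link. Missing sets can therefore kill only a small share of the total $\sum_A\deg(A)$ unless the missing portion is comparable to $\mathcal{F}$ itself; in that case Step 1 already yields many edges. For (ii), the same scheme with $r=\Theta(n^{k-t-1})$ gives $\Omega(n^{k-t-1}\cdot n^{k-2t-1})$.
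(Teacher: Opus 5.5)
Your upper-bound construction is exactly the paper's first construction in Section~\ref{sec42}, and it is fine: a full $(t+1)$-star plus $r$ sets from a second $(t+1)$-star with disjoint centre, and a bounded number of such stars for (ii). The lower bound, however, has two genuine gaps.

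\textbf{Step 1 fails in most of the range.} Deleting one endpoint per edge leaves $|\ff'|\ge \ell-\rho(\ff)$. With $\rho(\ff)$ of order $rn^{k-2t-1}$, this is $\binom{n-t-1}{k-t-1}-o(n^{k-t-1})$ only if $r=o(n^t)$. For $n^t\lesssim r=o(n^{k-t-1})$ the edge budget is at least of order $|\ff|$. In (ii) the target $n^{2k-3t-2}$ exceeds $\ell=\Theta(n^{k-t-1})$ by a factor $n^{k-2t-1}$, so $\ff'$ can be empty. Your bound $rn^{k-2t-1}\le rn^{k-t-2}$ does not help, since $rn^{k-t-2}=o(n^{k-t-1})$ would need $r=o(n)$.

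So you cannot reduce to stability for independent families; you need a structure statement that tolerates many edges. The paper obtains it by decomposing $\ff$ itself with Theorem~\ref{thm8} into $O(f)$ regularized pieces ($f\to\infty$ slowly). By Claim~\ref{cla12}, a piece that is not $t$-avoiding already carries $\Omega(n^{2k-3t-2}/f^2)\gg rn^{k-2t-1}$ edges. The remaining pieces are each of type (b) in Lemma~\ref{lem9}, with a $(t+1)$-centre. They are merged into few stars via Claims~\ref{cla13}, \ref{cla14} and peeling. A dominant star is then extracted by counting edges between stars over common $t$-shadows, together with Kruskal--Katona.

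Your kernel argument in Step 2 is also invalid as stated: the $(t+2)$-subsets of a $(t+3)$-set pairwise meet in $t+1$ points but share no common $(t+1)$-set. In (ii) no single star need dominate. The paper instead lower-bounds the edges between different stars by Cauchy--Schwarz over $t$-sets $X$, using $|\cG_i'(X)|\le\binom{n}{k-2t-1}$; ``the same scheme'' does not cover this.

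\textbf{Steps 3--4 are false as stated.} Missing star sets can kill \emph{all} edges between the star part $\cA$ and the rest $\bb$, while only $O(r)=o(n^{k-t-1})$ star sets are missing. The paper's second construction (Section~\ref{sec42}) does exactly this. There $\bb\subset\binom{Y}{k}$ with $|Y|=x$, and one omits from the star the $O(\binom{x}{t}n^{k-2t-1})$ sets meeting $Y$ in at least $t$ points. Then no $B\in\bb$ has a neighbour in $\cA$. Spreadness of each individual neighbourhood is irrelevant, because $\bb$ itself is concentrated and the union of the neighbourhoods is small. In that regime the $\Theta(rn^{k-2t-1})$ edges lie inside $\bb$, and your plan has no mechanism to find them.

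The missing idea is the paper's dichotomy. Let $\bb'$ be the members of $\bb$ with at most $d/2$ neighbours in $\cA$, where $d=\binom{n-k-t-1}{k-2t-1}$.
\begin{itemize}
\item If $|\bb'|<|\bb|/2$, you get $rd/4$ cross edges.
\item Otherwise every $D\in\partial^{(t)}(\bb')$ forces $d/2$ missing star sets through $D$. Hence $|\bb|\ge|\cA'|\ge|\partial^{(t)}(\bb')|d/(2\binom{k}{t})$ (Claim~\ref{cla16}).
\end{itemize}
In the second case, take a regularized $\bb''\subset\bb'$ from Theorem~\ref{thm8}. If $\bb''$ is $t$-avoiding, this contradicts the Frankl--F\"uredi shadow bound (Theorem~\ref{thm10}) plus Kruskal--Katona, because $|\bb|=o(n^{k-t-1})$. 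If it is not $t$-avoiding, Claim~\ref{cla12} gives $\rho(\bb'')\ge c|\bb''|^2/|\partial^{(t)}(\bb'')|=\Omega(|\bb|d)=\Omega(rn^{k-2t-1})$.
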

Speaking of the supersaturation phenomenon, we see that in Theorem~\ref{thm4} and Theorem~\ref{thm5}~(i), we have $\rho(\ell) = \Theta(\frac{\ell^2}{n^{t}})$, but in the case $r=o(\alpha(G(n,k,t)))$ of Theorem~\ref{thm5}~(ii) we observe that $\rho(\ell)=o(\frac{\ell^2}{n^t})$. That is, the minimum number of edges in a subgraph of $G(n,k,t)$ of size $\ell$ and the average number of edges in a random subgraph of the same size are the same up to a constant factor iff $\ell>(1+\epsilon)\alpha(G(n,k,t)).$

The final theorem of this section determines the exact minimum number of edges for small  $r(n)$.

\begin{Theorem}\label{thm6}
    Fix positive integers $k,t$ with $k \geqslant 2t+3$ and a function $\ell(n) = \binom{n-t-1}{k-t-1} + r(n)$ such that $r=r(n)$ is a positive integer-valued function. If $r=o(n^{k-2t-1})$, then there exists $n_0$ such that for $n > n_0$
    $$\rho(\ell) = r \binom{k}{t}\binom{n-k-t-1}{k-2t-1}.$$
\end{Theorem}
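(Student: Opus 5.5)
Both bounds in Theorem~\ref{thm6} come from the star $\mathcal S=\{F\in{[n]\choose k}: T\subset F\}$, $|T|=t+1$, which has size ${n-t-1\choose k-t-1}$ and induces no edges.

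\textbf{Upper bound.} Add to $\mathcal S$ any $r$ sets $G_1,\dots,G_r$ with $|G_i\cap T|=0$ that pairwise intersect in at most $t-1$ elements. For $r=o(n^{k-2t-1})$ such sets exist greedily, since $r\ll n^t$. Each $G_i$ is adjacent to exactly those $F\supset T$ with $|F\cap G_i|=t$. To build such an $F$, choose a $t$-subset of $G_i$ in ${k\choose t}$ ways. Then choose the remaining $k-2t-1$ elements outside $T\cup G_i$, which has $n-k-t-1$ elements, in ${n-k-t-1\choose k-2t-1}$ ways. Pairs among the $G_i$ contribute nothing. Hence
\[
\rho(\ell)\le r{k\choose t}{n-k-t-1\choose k-2t-1}.
\]
I should also check that $|G_i\cap T|=0$ minimizes this count among choices of $|G_i\cap T|=j\le t$. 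For $j\ge1$ the count is ${k-j\choose t-j}{n-k-t-1+j\choose k-2t-1+j}\gg n^{k-2t-1}$, so taking $j=0$ is best.

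\textbf{Lower bound.} Let $\ff$ have size $\ell$ and suppose $\rho(\ff)\le r{k\choose t}{n-k-t-1\choose k-2t-1}=o(n^{2(k-2t-1)})$. The plan is to use the Delta-system/kernel machinery of Frankl--F\"uredi behind Theorem~\ref{thmff}(i) in a stability form, in four steps.

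First, delete a set $\mathcal D\subset\ff$ of small size that hits every edge, for instance one endpoint of each edge, so $|\mathcal D|\le\rho(\ff)$. This leaves an independent family $\ff'$ with $|\ff'|\ge \ell-\rho(\ff)$.

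Second, show $\ff'$ is close to a star. I would use the Frankl--F\"uredi structure theorem for $t$-avoiding families with $k\ge 2t+2$ and large $n$. After removing $O(n^{k-t-2})$ sets, the family has a $(t+1)$-intersecting kernel structure. Sets outside a single $(t+1)$-star contribute only $O(n^{k-t-2})$. Since $|\ff'|\ge{n-t-1\choose k-t-1}-o(n^{k-t-1})$, some $T$ satisfies $|\ff(T)|\ge {n-t-1\choose k-t-1}-O(n^{k-t-2})$, where $\ff(T)$ denotes the sets of $\ff$ containing $T$.

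Third, let $\mathcal G=\ff\setminus \ff(T)$, so $|\mathcal G|\ge r+m$ with $m$ the number of sets of the full star missing from $\ff$. Each $G\in\mathcal G$ with $|G\cap T|=j\le t$ has $\ge{k\choose t}{n-k-t-1\choose k-2t-1}$ neighbours in the full star. It loses at most the neighbours among the $m$ missing sets. The main obstacle is bounding this loss. I would double count: a missing star set $F$ is adjacent to at most roughly $|\mathcal G|\cdot$ (fraction) of $\mathcal G$. More cleanly, count pairs $(G,F)$ with $F$ missing and $|F\cap G|=t$. Since each star set has about $n^{k-t}$ neighbours and $|\mathcal G|$ is small, choosing $n$ large and using $|\mathcal G|=O(r+m)$ should give a loss of $o(|\mathcal G|\,n^{k-2t-1})$ unless $m$ is large. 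Sets with $|G\cap T|=t+1$ cannot occur by definition, and intermediate $j$ only increase the count.

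Fourth, handle large $m$. Then $\mathcal G$ is large, and $\mathcal G$ itself must either induce many edges or, by Theorem~\ref{thmff}, be structured. Sets in $\mathcal G$ containing some other $(t+1)$-set $T'$ are adjacent to the many star sets sharing exactly $t$ elements with them. This gives $\rho(\ff)\ge (r+m)(1-o(1)){k\choose t}{n-k-t-1\choose k-2t-1}$ minus the loss. One then needs $m$ to have an additive effect that is at least neutral. This exact-constant bookkeeping, combining $r=o(n^{k-2t-1})$ with $k\ge2t+3$ so that $n^{k-2t-1}\to\infty$ dominates the error terms, is where I expect the main difficulty. The last part, upgrading from $(1-o(1))$ to exact equality, would use integrality: any $G$ achieving fewer than the exact count must be adjacent to a missing star set, and each missing set costs more than it saves.
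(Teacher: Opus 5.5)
Your proposal has genuine gaps: the first two steps of the lower bound fail in part of the theorem's range, and the mechanism that gives the exact constant is missing. There is also a fixable error in the upper bound.

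\textbf{Steps 1--2 and the upper bound.} The edge budget you may assume is $\rho(\ff)<r\binom{k}{t}d$ with $d=\binom{n-k-t-1}{k-2t-1}$, which is only $o(n^{2k-4t-2})$. But $2k-4t-2>k-t-1$ whenever $k\ge 3t+2$. For example, with $t=1$, $k=5$, $r=n^2/\log n$ the budget is of order $n^4/\log n\gg\ell$. Then $\ell-\rho(\ff)$ can be negative, deleting an endpoint of every edge may leave nothing, and you never reach a large $t$-avoiding family.

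Even when you do reach one, the $O(n^{k-t-2})$ error in your stability claim is false. Take $Y\subset[n]\setminus T$ with $|Y|=n/\log n$. Delete from the star on $T$ the sets meeting $Y$ in at least $t$ elements, and add all $k$-subsets of $Y$ through a fixed $(t+1)$-set. This is a $t$-avoiding family of size $\binom{n-t-1}{k-t-1}-o(n^{k-t-1})$ with $\gg n^{k-t-2}$ sets off the star.

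What is actually needed is a supersaturation statement: either $\rho(\ff)\gg rn^{k-2t-1}$, or some $(t+1)$-set $C$ lies in $(1-o(1))\ell$ members of $\ff$. The paper obtains this by rerunning the proof of Theorem~\ref{thm5}\,(i) on $\ff$ itself, not by passing to an independent subfamily. That proof uses iterated Theorem~\ref{thm8}, Lemma~\ref{lem9} with Claim~\ref{cla12} for type 1 pieces, the shadow counting of Claims~\ref{cla13} and~\ref{cla14}, peeling, and Kruskal--Katona.

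Your upper bound has the same range problem. $k$-sets with pairwise intersections at most $t-1$ number at most $\binom{n}{t}/\binom{k}{t}$, and $r=o(n^{k-2t-1})$ does not give $r\ll n^t$ when $k\ge 3t+2$. The fix is to put the $r$ extra sets into a $(t+1)$-star whose center is disjoint from $T$, as in Section~\ref{sec42}. They then pairwise meet in at least $t+1$ elements, and each still has exactly $\binom{k}{t}d$ neighbours in the star.

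\textbf{Steps 3--4 (the exact constant).} You have the right ingredient but do not draw the conclusion that makes the easy case immediate, and you lack the mechanism for the hard case. Let $\mathcal{A}=\ff[C]$, $\mathcal{B}=\ff\setminus\mathcal{A}$, and let $\mathcal{A}'$ be the missing star sets, so $|\mathcal{A}'|=|\mathcal{B}|-r$. Every $B\in\mathcal{B}$ then has at least $\binom{k}{t}d-|\mathcal{B}|+r$ neighbours in $\mathcal{A}$, so $\rho(\ff)\ge|\mathcal{B}|\bigl(\binom{k}{t}d+r-|\mathcal{B}|\bigr)$. This is concave in $|\mathcal{B}|$ and equals $r\binom{k}{t}d$ at both $|\mathcal{B}|=r$ and $|\mathcal{B}|=\binom{k}{t}d$. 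Hence it settles $r\le|\mathcal{B}|\le\binom{k}{t}d$ exactly, with no $(1-o(1))$ bookkeeping or integrality argument.

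The real case is $|\mathcal{B}|>\binom{k}{t}d\gg r$. There, ``each missing set costs more than it saves'' is not a local fact. A single missing star set is $t$-adjacent to $\Theta(n^{k-t})$ sets, and the construction in Section~\ref{sec52} shows that removing star sets does pay off once $r$ is of order $n^{k-2t-1}$.

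The paper handles this case as follows. Let $\mathcal{B}'$ be the sets of $\mathcal{B}$ with at most $d/2$ neighbours in $\mathcal{A}$; this is all but fewer than $2\binom{k}{t}r$ sets of $\mathcal{B}$. Claim~\ref{cla16} shows $|\mathcal{B}|\ge|\mathcal{A}'|\ge|\partial^{(t)}(\mathcal{B}')|\,d/(2\binom{k}{t})$. Then pass to $\mathcal{B}''\subset\mathcal{B}'$ via Theorem~\ref{thm8}. If $\mathcal{B}''$ is $t$-avoiding, Theorem~\ref{thm10} and Kruskal--Katona contradict Claim~\ref{cla16}, since $|\mathcal{B}|=o(n^{k-t-1})$. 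Otherwise Claim~\ref{cla12} gives $\rho(\ff)\ge c_3|\mathcal{B}|d\gg rd$. Your fourth step contains none of this, so the lower bound is not established.
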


We organize the rest of the paper as follows. In the next section, we introduce the notation, several important tools for the proofs and sketchs of the proofs. In Section~\ref{sec3}, we prove Theorem~\ref{thm4}. In Subsections~\ref{sec41} and~\ref{sec42}, we prove the lower bound in Theorem~\ref{thm5}~(i) and give two different extremal configurations for the upper bound. In Section~\ref{sec43}, we adapt the proof of Theorem~\ref{thm5}~(i) to derive the lower bound in Theorem~\ref{thm5}~(ii). In Section~\ref{sec5}, we prove Theorem~\ref{thm6} and introduce one more construction that demonstrates the optimality of the bound on $r(n)$ in Theorem~\ref{thm6}. 

\section{Preliminaries}\label{sec2}

We start with several classical definitions.

A family $\mathcal{F} = \{F_1, \ldots, F_s\}$ is a \textit{sunflower} with $s$ petals and kernel $D$ if $F_i \cap F_j = D$ holds for any distinct $F_i$ and $F_j$ from $\mathcal{F}$. Sunflowers are also called \textit{$\Delta$-systems}; therefore, it can be said that we will employ the $\Delta$-system method in our proofs.

A family $\mathcal{F} \subset\binom{[n]}{k}$ is \textit{$k$-partite} if there exists a partition $[n]=X_1 \sqcup X_2 \sqcup \ldots \sqcup X_k$ such that for any $F \in \mathcal{F}$ and $i \in[k]$ we have $\left|F \cap X_i\right|=1$. Further, for each set $B \in \mathcal{F}$ define the trace $\left.\mathcal{F}\right|_B:=\{A \cap B: A \in$ $\mathcal{F}, A \neq B\}$. We exclude $B$ itself from the trace for convenience. If $\mathcal{F}$ is $k$-partite with parts $X_1, \ldots, X_k$, then each set in $\left.\mathcal{F}\right|_B$ intersects each part $X_i$ in at most 1 element. The following notion encodes, which parts are intersected. Define the projection $\pi: 2^{[n]} \rightarrow[k]$ as follows: $\pi(Y)=\left\{i \in[k]: Y \cap X_i \neq \emptyset\right\}$. We can naturally extend this definition to $\pi\left(\left.\mathcal{F}\right|_B\right) \subset 2^{[k]}$. Define \textit{intersection structure} $\operatorname{Int}(\mathcal{F})$ as follows: 
$$
\operatorname{Int}(\mathcal{F})=\{\pi(E \cap F): E, F \in \mathcal{F}, E \neq F\}.
$$
Also, define
$$F_J:=F \cap\left(\bigcup_{i \in J} X_i\right),$$
where $J \subset [k].$

In what follows, we state F\"uredi’s Structural Theorem \cite{F}, a key ingredient of the $\Delta$-system method.

\begin{Theorem}\label{thm7}
(F\"uredi). Fix some positive integers $s,k$. Let $\mathcal{F}$ be a family of $k$-element sets. Then there exists $c(k, s)>0$ and a $k$-partite family $\mathcal{F}^* \subset \mathcal{F}$ such that\\
(1) $\left|\mathcal{F}^*\right| \geqslant c(k, s)|\mathcal{F}|$;\\
(2) $\pi\left(\left.\mathcal{F}^*\right|_A\right)=\pi\left(\left.\mathcal{F}^*\right|_B\right)$ for any $A, B \in \mathcal{F}$;\\
(3) for each $A, B \in \mathcal{F}^*$ their intersection $A \cap B$ is a kernel of a sunflower with $s$ petals contained in $\mathcal{F}^*$.
\end{Theorem}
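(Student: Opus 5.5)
The plan is to follow F\"uredi's original strategy: first pass to a $k$-partite subfamily by a random partition, then repeatedly thin the family by pigeonholing over ``intersection types'', with a fixed number of rounds and a constant-factor loss in each round. (In (2) I read the quantifier as ``for any $A,B\in\ff^*$''.)

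\textbf{Step 1: $k$-partiteness.} Colour $[n]$ uniformly at random with $k$ colours, and let $X_1,\dots,X_k$ be the colour classes. A fixed $k$-set is rainbow with probability $k!/k^k$. Hence some partition keeps a $k$-partite subfamily $\ff_0\subset\ff$ with $|\ff_0|\ge (k!/k^k)|\ff|$. From now on every set $F$ is identified with its coordinates $(F_{\{1\}},\dots,F_{\{k\}})$. For $J\subset[k]$, two sets $A,B$ have $\pi(A\cap B)=J$ exactly when $A_J=B_J$ and $A,B$ differ in every coordinate outside $J$.

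\textbf{Step 2: types and the dichotomy.} Fix a descending sequence of thresholds $s_0\gg s_1\gg\dots\gg s_{m}=s$ with $m=2^k$; each is chosen large as a function of $k$ and the next one. In a current family $\cG$, call $F_J$ \emph{$i$-rich} if it is the kernel of a sunflower in $\cG$ with $s_i$ petals. The sets $G\in\cG$ with $G_J=F_J$ form, after deleting $F_J$, a $(k-|J|)$-partite link family. A sunflower with kernel exactly $F_J$ corresponds to pairwise disjoint link sets. So if $F_J$ is not $i$-rich, the link has no $s_i$ pairwise disjoint members, and its members are covered by at most $(k-|J|)s_i$ vertices.

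Define the type of $F$ as $T(F)=\{J: F_J \text{ is } i\text{-rich in } \cG\}$. Pigeonholing over the $2^{2^k}$ possible types keeps a $2^{-2^k}$ fraction with a common type $T$. The goal is to reach a family where every $J\in T$ is rich and no pair of sets has intersection pattern outside $T$. Then (2) holds, with the common trace projection equal to $T$, and (3) holds provided richness survives in the final subfamily.

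\textbf{Step 3: eliminating bad patterns and keeping sunflowers.} For $J\notin T$, every pair $A,B$ with $\pi(A\cap B)=J$ has $A_J=B_J$ with $A_J$ non-rich. The link of $A_J$ is covered by $O(s_i)$ vertices. I would fix such conflicts by a second random restriction. For each non-rich kernel $Y$, keep one random ``branch'' of its bounded cover structure, for instance by randomly deleting vertices with a suitable probability. Then each surviving $Y$ extends to essentially one set, which kills pattern $J$ while losing only a constant factor depending on $k$ and $s_i$.

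For $J\in T$, an $s_i$-sunflower in $\cG$ might lose petals when we pass to the subfamily. This is handled by the nested thresholds. Since $s_i\gg s_{i+1}$ and the deletion in this round keeps each set with probability bounded below by a constant $c_i$, a kernel that loses too many petals becomes non-rich at level $i+1$. Such a kernel is then treated in the next round as above.

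Each round either stabilises the type, or it strictly changes $T$ by dropping a $J$ that became non-rich. Since the type only shrinks, there are at most $2^k$ rounds, and $c(k,s)$ is the product of the per-round constants.

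\textbf{Main obstacle.} The hardest step is Step 3: ensuring that sunflowers witnessing richness actually lie inside the final subfamily while the family shrinks. My first attempt is the hierarchy of thresholds combined with an expectation argument. A sunflower with $s_i$ petals, thinned with survival probability $\ge c_i$, retains $\ge s_{i+1}$ petals with positive probability. Taking $s_i\ge s_{i+1}/c_i$ squared should make a union-type averaging argument work per set $F$. If that fails, I would fall back on F\"uredi's deterministic ``pruning'' lemma, which deletes sets whose kernels have few petals. That deletion has small total weight, by counting via the cover bound $(k-|J|)s_i$.
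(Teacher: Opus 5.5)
The paper does not prove this theorem (it is quoted from F\"uredi), so your proposal has to stand on its own, and at its core it does not. The step you flag as the main obstacle is keeping the sunflowers that certify richness inside the family you finally output, while also enforcing the common trace (2). That step is not resolved, and the mechanism you sketch cannot resolve it.

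(a) In round $i$ the same number $s_i$ both defines ``rich'' and bounds the covers used to kill non-rich patterns. So your survival constant $c_i$ necessarily depends on $s_i$ and can be of order $1/s_i$. For example, the link of a non-rich kernel may be a matching of $s_i-1$ edges, and a petal survives the restriction at such a kernel with probability about $1/s_i$. A sunflower with $s_i$ petals may then keep only $O(1)$ of them in expectation, and the requirement $s_i\ge (s_{i+1}/c_i)^2$ cannot be met.

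(b) Petal survivals are not independent. All petals of $F_J$ contain $F_{J'}$ for every $J'\subsetneq J$. For $J'\notin T$, a single choice at $F_{J'}$ decides all of them at once, and if that element lies outside $F_J$, at most one petal survives.

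(c) Most seriously, the bound of $2^k$ rounds is unfounded. Pigeonholing onto the most popular type is a deterministic selection, and it may discard exactly the witnessing petals. After a round, the popular type can again be $T$ while some sets have dropped to smaller types. Passing again to the type-$T$ class can then destroy further witnesses without $T$ shrinking, and this can repeat. ``The popular type did not change'' is not the same as ``every set has type $T$ with its witnesses inside the current family.'' So nothing bounds the number of rounds by a function of $k$, and $c(k,s)$, a product over rounds, is not bounded below.

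There is also a smaller problem in Step 3 as written. ``Each surviving $Y$ extends to essentially one set'' would be fatal: if $J=\emptyset$ is non-rich, you would keep a single set. Global vertex deletion also does not act locally at $Y$. What works is an independent choice of one cover element $x_Y$ for each non-rich $Y$, keeping $F$ iff $x_{F_J}\in F$ for every $J\notin T$. Extensions of $Y$ then share $x_Y$ rather than being unique.

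In fact (3) alone is easy along these lines if done adaptively:
\begin{itemize}
\item While some current intersection $Z$ is not an $s$-kernel in the current family, pick a uniformly random $x$ from a cover of its link (at most $k(s-1)$ vertices) and discard the sets through $Z$ that miss $x$.
\item Each $Z$ is processed at most once, since afterwards it is never an intersection again.
\item Hence each $F$ is exposed at most $2^k-1$ times, each time surviving with probability at least $1/(k(s-1))$.
\item At termination, every intersection is an $s$-kernel inside the family.
\end{itemize}
The missing idea is how to combine this with the homogeneity (2). You would need either a selection achieving (2) that provably preserves the witnesses, or a potential bounded in terms of $k$ that strictly decreases whenever a witness is lost. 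The proposal supplies neither.
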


We will also use an alternative, more quantitative version of this theorem due to Jiang and Longbrake \cite{JL}.

For a family $\mathcal{F}$ and set $X$, we denote 
\begin{align*}
    \mathcal{F}(X) = \{F \setminus X : F \in \mathcal{F}, X \subset F \}\\
    \mathcal{F}[X] = \{F: F \in \mathcal{F}, X \subset F \}.    
\end{align*}

Given a positive real number $s$, a family $\mathcal{H} \subset 2^{[n]}$ is called \textit{$s$-diverse} if

$$
\forall v \in [n], d_{\mathcal{H}}(v) \leqslant (1 / s)|\mathcal{H}|,
$$
where $d_{\mathcal{H}}(v)$ is the number of $H \in \mathcal{H}$ such that $v \in H$. 

\begin{Theorem}\label{thm8}
Fix some integers $s,k \geqslant 2$ such that $s \geqslant 2k$. Let $\mathcal{F}$ be a family of $k$-element sets. Then there exists $c(k, s)>0$ and a $k$-partite family $\mathcal{F}^* \subset \mathcal{F}$ such that\\
(1) $\left|\mathcal{F}^*\right| \geqslant c(k, s)|\mathcal{F}|$;\\
(2) For each $J \in \operatorname{Int}(\mathcal{F}^*)$ and $F \in \mathcal{F}^*$, $$|\mathcal{F}^*(F_J)| \geqslant \max\left\{s, \cfrac{|\mathcal{F}^*|}{2k \cdot n^{|J|}}\right\}.$$ 
(3) For each $J \in \operatorname{Int}(\mathcal{F}^*)$ and $F \in \mathcal{F}^*$, $\mathcal{F}^*(F_J)$ is $s$-diverse.
\end{Theorem}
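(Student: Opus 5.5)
The plan is to follow the standard two-stage scheme of the $\Delta$-system method: first make the family $k$-partite, then repeatedly ``clean'' it. For the first stage, take a uniformly random partition $[n]=X_1\sqcup\ldots\sqcup X_k$. A fixed $k$-set meets every part exactly once with probability $k!/k^k$, so some partition gives a $k$-partite subfamily $\mathcal{F}_0\subset\mathcal{F}$ with $|\mathcal{F}_0|\ge \frac{k!}{k^k}|\mathcal{F}|$. From then on, each $F_J$ is determined by $J$ and a choice of one element in each $X_i$, $i\in J$. Consequently, for each fixed $J$ there are at most $n^{|J|}$ distinct ``$J$-projections'' $P=F_J$.

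For the second stage, call a pair $(F,J)$ \emph{bad} for the current family $\mathcal{G}$ if either $|\mathcal{G}(F_J)|<\max\{s,|\mathcal{G}|/(2k n^{|J|})\}$ or $\mathcal{G}(F_J)$ is not $s$-diverse. Handle the first kind of badness by deletion. For a fixed $J$, the sets lying in links of size below $|\mathcal{G}|/(2kn^{|J|})$ number at most $n^{|J|}\cdot |\mathcal{G}|/(2kn^{|J|})=|\mathcal{G}|/(2k)$. Links of size below $s$ contain at most $s$ sets each, and this is absorbed after passing to a subfamily where every occurring projection is heavy. Handle the second kind, non-diversity, differently: if some $v$ lies in more than $|\mathcal{G}(F_J)|/s$ of the link sets, then, since $v\in X_i$ for some $i\notin J$, we restrict the link to sets through $v$. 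This replaces $F_J$ by the larger projection $F_{J\cup\{i\}}$ while losing only a factor $s$ in that link.

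The key point is that we do \emph{not} need conditions (2) and (3) for every $J$, only for $J\in\operatorname{Int}(\mathcal{F}^*)$. So the cleaning can remove $J$ from the intersection structure instead of repairing it. If for many $F$ the projection $F_J$ is unique, or is forced to a larger projection $F_{J\cup\{i\}}$, then no two surviving sets intersect in exactly the $J$-pattern. I would organise the process by processing the subsets $J\subset[k]$ in order of decreasing $|J|$. Each step either deletes at most a $1/(2k)$-type fraction or shrinks the family by a factor depending only on $k,s$, and it never re-creates an intersection pattern that was already destroyed. A potential-function or pigeonhole argument over the at most $2^{2^k}$ possible intersection structures (as in F\"uredi's proof of Theorem~\ref{thm7}) then shows the process stabilises after a number of rounds bounded in terms of $k$. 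The result is a family $\mathcal{F}^*$ with $|\mathcal{F}^*|\ge c(k,s)|\mathcal{F}|$ in which every realised $J$ satisfies (2) and (3).

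I expect the main obstacle to be bookkeeping the threshold $|\mathcal{F}^*|/(2kn^{|J|})$, which is stated in terms of the \emph{final} family rather than the current one. Deletions at a later stage may shrink links that were verified earlier. My first attempt at this would be to use thresholds with slack: at stage $j$, demand links of size at least $2^{j}\cdot|\mathcal{G}_j|/(2kn^{|J|})$, so that the bound survives the bounded number of subsequent halvings. I would also choose $s\ge 2k$, as assumed, so that diversity of a link persists after deleting at most a $1/(2k)$-fraction of the family.
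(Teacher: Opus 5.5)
The paper gives no proof of this statement; it is imported from Jiang and Longbrake \cite{JL}. Judged on its own, your proposal has a genuine gap, and it sits where the content of the theorem lies. The random $k$-partition is fine. The decisive claim, that the cleaning ``stabilises after a number of rounds bounded in terms of $k$'', is only asserted.

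The difficulty is that (2) and (3) must hold for \emph{every} $F\in\mathcal{F}^*$ as soon as a \emph{single} pair in $\mathcal{F}^*$ realizes the pattern $J$. Membership in $\operatorname{Int}(\mathcal{F}^*)$ is global, while the quality of $\mathcal{F}^*(F_J)$ is local. Your two moves do not bridge this.

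\begin{itemize}
\item Deletion cannot enforce the threshold $s$. If all pairwise intersections have fewer than $|J|$ elements, every $J$-link has size $1$. Your phrase ``absorbed after passing to a subfamily where every occurring projection is heavy'' presupposes such a subfamily exists, and that is F\"uredi's theorem itself.
\item Restricting a non-diverse link $\mathcal{G}(F_J)$ to the sets through a heavy vertex $v$ can discard almost a $(1-1/s)$-fraction of it. It also need not remove $J$ from $\operatorname{Int}$, since other projections with pattern $J$ may still realize it. In that case the sets through $F_J\cup\{v\}$ violate (3) as badly as possible, because their $J$-link is now a star.
\end{itemize}

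Fixing this forces you to make the traces $\pi(\mathcal{G}|_F)$ homogeneous as well, which changes all links again. Monotonicity of $\operatorname{Int}$ gives at most $2^k$ strict decreases. But nothing makes a cleaning round decrease $\operatorname{Int}$: a round can re-break (2) or (3) for patterns that remain realized. So you get no bound on the number of rounds, and hence no $c(k,s)$.

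The bookkeeping is also off.

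\begin{itemize}
\item Deleting a constant fraction of the family can empty an individual link, so a slack factor $2^j$ protects nothing. With thresholds $2^j|\mathcal{G}_j|/(2kn^{|J|})$, your per-pattern loss bound becomes $2^j|\mathcal{G}_j|/(2k)$, which is vacuous once $2^j\ge 2k$.
\item For the same reason, $s$-diversity of a link does not survive deleting a $1/(2k)$-fraction of the family. What $s\ge 2k$ actually gives (and how the paper uses it in Claim~\ref{cla12}) is that at least half of $\mathcal{F}^*(F_J)$ avoids $F\setminus F_J$.
\item Even your one-shot bound of $|\mathcal{G}|/(2k)$ per pattern, summed over $2^k-1$ patterns, exceeds $|\mathcal{G}|$ for $k\ge 3$.
\end{itemize}

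The right tool for the $n^{|J|}$-part is an amortized cascade with thresholds $\theta_J=\delta|\mathcal{G}_0|/n^{|J|}$, fixed relative to the starting family. Each projection is emptied at most once, so the total loss is below
$\sum_{J}\theta_J\prod_{i\in J}|X_i|=\delta|\mathcal{G}_0|\prod_{i=1}^k(1+|X_i|/n)<e\,\delta|\mathcal{G}_0|$.
Since $|\mathcal{F}^*|\le|\mathcal{G}_0|$, the ``final family'' issue you single out is not the real obstacle. This argument, however, says nothing about the threshold $s$ or about diversity.

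What is missing is an actual F\"uredi-type homogenization. You need a procedure each of whose rounds provably shrinks a bounded invariant (for instance the trace profile of the surviving sets), at the cost of a factor depending only on $k$ and $s$. The parameters must be arranged so that goodness established in one round survives the later rounds and the amortized deletion.
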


A key property of $\mathcal{F}^*$ from both theorems is that $\operatorname{Int}(\mathcal{F}^*)$ is closed under intersections. It means that for any $A,B \in \operatorname{Int}(\mathcal{F}^*)$ set $A \cap B$ also belongs to $\operatorname{Int}(\mathcal{F}^*)$. 

Another ingredient, we need is the famous Kruskal-Katona Theorem \cite{Kru}, \cite{Kat2}. For an integer $i \geqslant 0$, we define the \textit{$i$-shadow} of $\mathcal{F}$ to be

$$
\partial^{(i)}(\mathcal{F}):=\{D:|D|=i, \exists F \in \mathcal{F}, D \subset F\}
$$

The Lovász' version (\cite{Lov}, 13.31b) of the Kruskal-Katona Theorem states that if $\mathcal{F}$ is a $k$-graph of size $|\mathcal{F}|=\binom{x}{k}$, where $x \geqslant k$ is a real number, then for all $i$ with $1 \leqslant i \leqslant k-1$ one has

$$
\left|\partial^{(i)}(\mathcal{F})\right| \geqslant \binom{ x}{i}
$$

Lastly, we will need one lemma and one theorem from \cite{FF}.

We say that some family $\mathcal{M}$ \textit{covers} a set $A$, if there exists $M \in \mathcal{M}$ such that $A \subset M$.
Recall that the \textit{rank}  $r(\mathcal{M})$ of $\mathcal{M} \subset 2^{[k]}$ is the size of the smallest set in $2^{[k]}$ not contained in any set from $\mathcal{M} \backslash[k]$ (not covered by $\mathcal{M}$).

\begin{Lemma} \label{lem9}
(\cite{FF}, Lemma 5.5).
Fix some integers $k,t$ such that $k \geqslant 2t+3.$  Let $\mathcal{M} \subset 2^{[k]}$ be the family that is closed under intersection. Suppose that $\mathcal{M}$ covers all $(k-t-2)$-element subsets of $[k]$, i.e. $r(\mathcal{M}) \geqslant k-t-1$. Then (a) or (b) holds.\\
(a) There exists some $M \in \mathcal{M}$ with $|M|=t.$\\
(b) There exists a unique set $M \in \mathcal{M}$ with $|M|=t+1$ such that $\mathcal{M}$ consists of all supersets of $M$ and some at most $(t-1)$-elements subsets.
\end{Lemma}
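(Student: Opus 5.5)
This is Lemma 5.5 of Frankl and F\"uredi \cite{FF}; here is a self-contained argument from the hypotheses. By convention $[k]\in\mathcal{M}$, so $\mathcal{M}$ is closed under intersection. For $A\subset[k]$ with $|A|\le k-t-2$, write $\langle A\rangle$ for the intersection of all members of $\mathcal{M}$ containing $A$. Since $\mathcal{M}$ covers $A$, this is a nonempty intersection, so $\langle A\rangle\in\mathcal{M}$ and $A\subset\langle A\rangle$.

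Assume (a) fails, i.e.\ no member of $\mathcal{M}$ has exactly $t$ elements.

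\textbf{Step 1: closures of $t$-sets.} Let $T$ be any $t$-set. Then $t\le k-t-2$, so $\langle T\rangle\in\mathcal{M}$ is defined. It contains $T$ and is not equal to $T$, so $|\langle T\rangle|\ge t+1$.

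\textbf{Step 2: two $(t+1)$-sets in $\mathcal{M}$ meet in exactly $t$ elements.} Suppose $M_1\ne M_2$ are $(t+1)$-sets in $\mathcal{M}$. Assume first that $|M_1\cup M_2|\le k-t-2$. Choose any set $C\subset[k]\setminus(M_1\cup M_2)$ with
\[
|C|=k-t-2-|M_1\cup M_2| .
\]
Then $M_1\cup M_2\cup C$ has size $k-t-2$, so it is covered by some $M\in\mathcal{M}$. Intersecting $M$ with $M_1$ or $M_2$ only returns $M_1$ or $M_2$, so this alone gives nothing; we need more freedom. Instead, take $x\in M_1\setminus M_2$ and $y\in M_2\setminus M_1$, and set $S=(M_1\cap M_2)\cup\{x\}$. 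Consider $\langle S\cup B\rangle$ for suitable sets $B$. The intersection $M_1\cap\langle M_2\cup\{x\}\rangle$ lies in $\mathcal{M}$ and contains $(M_1\cap M_2)\cup\{x\}$. Hence either it has at most $t-1$ elements, or it equals $M_1$. By choosing the auxiliary sets so that their intersection has size exactly $t$, we would produce a $t$-element member, contradicting our assumption. This is the counting heart of the argument.

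\textbf{Step 3: the structure in case (b).} Once a $(t+1)$-set $M\in\mathcal{M}$ is fixed, I would show two things.
\begin{itemize}
\item Every $t$-set not contained in $M$ has closure containing $M$. Otherwise, intersecting that closure with a suitable cover of its union with $M$ produces a set of size exactly $t$.
\item Consequently every member of $\mathcal{M}$ of size $\ge t$ contains $M$. If $N\in\mathcal{M}$, $|N|\ge t$ and $M\not\subset N$, then $|N\cap M|\le t$; one cuts $N$ down by intersecting it with covers of $(k-t-2)$-sets containing chosen $t$-subsets, and lands on a $t$-element member.
\end{itemize}
Every superset $A$ of $M$ with $|A|\le k-t-2$ is covered. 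The right cover can be chosen so that intersecting gives $A$ exactly: take $A\cup C$ and $A\cup C'$ with $C\cap C'=\emptyset$, which is possible because $k-|A|\ge t+2$. Supersets of $M$ of size larger than $k-t-2$ are obtained by intersecting the large sets already shown to lie in $\mathcal{M}$. Uniqueness of $M$ follows because every member of size $\ge t$ contains it.

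\textbf{Main obstacle.} The delicate part is the ``cutting down to size exactly $t$'' argument in Steps 2 and 3. Its key tool is the following: for any $A$ with $|A|\le k-t-2$ and any two disjoint complements $C,C'$ of the right sizes, the covering members $M_C\supset A\cup C$ and $M_{C'}\supset A\cup C'$ satisfy $M_C\cap M_{C'}\supseteq A$. I would iterate this, choosing the disjoint sets $C,C'$ inside $[k]\setminus A$, to show that $A$ itself lies in $\mathcal{M}$ whenever there are enough elements outside $A$. The condition $k\ge 2t+3$ is exactly what guarantees room for such disjoint choices when $|A|=t$. Applied to a $t$-set $A$, this iteration either yields $A\in\mathcal{M}$, which is case (a), or is blocked by an obstruction. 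Analysing that obstruction forces the common $(t+1)$-set $M$ of case (b).
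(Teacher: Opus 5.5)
The paper does not prove this lemma; it quotes it from Frankl and F\"uredi (their Lemma 5.5). Your argument therefore has to stand on its own, and as written it does not prove the statement. The whole lemma hinges on one claim: when $\mathcal{M}$ has no $t$-element member, some $(t+1)$-set lies in every member of size at least $t$. Your proposal never establishes this.

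\begin{itemize}
\item Step 1 only gives $|\langle T\rangle|\ge t+1$. That does not even produce a $(t+1)$-element member.
\item Step 2 is abandoned. Under your standing assumption it could only mean ``there is at most one $(t+1)$-member'', since two such members meeting in $t$ points would already give a $t$-member. No choice of ``auxiliary sets'' is ever exhibited. Also, $\langle M_2\cup\{x\}\rangle$ is undefined when $k=2t+3$, because then $|M_2\cup\{x\}|=t+2>k-t-2$.
\item Step 3 simply assumes a suitable $M$ is given. Its first bullet, the real crux, rests on a vacuous move. Any member covering $T\cup M$ contains $T$, hence already contains $\langle T\rangle$, so intersecting it with $\langle T\rangle$ returns $\langle T\rangle$. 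You noticed this same phenomenon yourself in Step 2. Moreover $T\cup M$ can have $2t+1$ elements and need not be covered at all.
\item The ``key tool'' in your last paragraph cannot work either. The hypothesis only hands you \emph{some} member containing $A\cup C$, possibly with extra points, so two such covers need not meet in exactly $A$. In configuration (b) itself, every member containing a $t$-subset of the center contains the whole center, so no iteration of this kind ever yields $A$. The ``analysis of the obstruction'' you postpone is the entire proof.
\end{itemize}

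Here is what does go through, which may help you organise a real argument. Put $Z=\{x\in[k]:[k]\setminus\{x\}\in\mathcal{M}\}$. If $\mathcal{M}$ has no $t$-member, the lemma is equivalent to $|Z|\ge k-t-1$.

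\begin{itemize}
\item If $|Z|\ge k-t-1$, then $|Z|=k-t-1$, since intersecting $k-t$ co-singletons would give a $t$-member. So $D=[k]\setminus Z$ is a $(t+1)$-member, and every proper superset of $D$ is an intersection of co-singletons.
\item A member $N$ with $|N|\ge t+1$ and $D\not\subset N$ could be trimmed to a member $N\setminus X$, with $X\subseteq N\cap Z$, of size exactly $t$. So every member of size at least $t$ contains $D$.
\item Conversely, suppose a $(t+1)$-set $M$ lies in all members of size $\ge t$. For $x\notin M$, the $(k-t-2)$-set $[k]\setminus(M\cup\{x\})$ is covered by a proper member of size $\ge t+1$. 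That member must contain $M$, so it equals $[k]\setminus\{x\}$. This, not the disjoint $C,C'$ device, is how all proper supersets of $M$ are obtained.
\end{itemize}

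In complements, the missing step reads: a union-closed family of nonempty subsets of $[k]$, in which every $(t+2)$-set contains a member and no member has exactly $k-t$ elements, contains at least $k-t-1$ singletons. Proving this needs a genuine extremal, size-adjustment argument that uses $k\ge 2t+3$ at a specific point. The lemma is false for $k=2t+2$: take $t=1$, $k=4$, $\mathcal{M}=\{\emptyset,\{1,2\},\{3,4\}\}$. Your proposal contains no such argument.
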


\begin{Theorem} \label{thm10}
(\cite{FF}, Theorem 5.1).
Fix some integers $k,t$ such that $k \geqslant 2t+1.$ Suppose $\mathcal{F} \subset \binom{[n]}{k}$ is $t$-avoiding, i.e. for all distinct $F, G \in \mathcal{F}$, $|F \cap G| \neq t$ holds. Then there exists a constant $c=c(k)$ such that
$$|\partial^{(k-t-1)}(\mathcal{F})| \geqslant c|\mathcal{F}|.$$
\end{Theorem}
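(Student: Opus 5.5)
We would prove this by reducing, via F\"uredi's structural theorem, to a statement about the intersection structure of a $k$-partite subfamily, and then finding for each set an ``own'' $(k-t-1)$-subset. First apply Theorem~\ref{thm7} with $s = k+1$ (any $s$ larger than $k$ will do) to $\ff$. This gives a $k$-partite $\ff^*\subset\ff$ with $|\ff^*|\ge c(k,k+1)|\ff|$, a common projected trace $\mathcal M := \pi(\ff^*|_F)$, the same for every $F\in\ff^*$, and the property that every pairwise intersection is the kernel of an $s$-petal sunflower in $\ff^*$. As noted after Theorem~\ref{thm8}, $\mathcal M$ is closed under intersection. Since $\ff^*\subset \ff$ is $t$-avoiding and every $J\in\mathcal M$ is realised as $\pi(F\cap G)$ with $|F\cap G|=|J|$, no member of $\mathcal M$ has size $t$.

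The goal is then to show that $\mathcal M$ does not cover all $(k-t-1)$-subsets of $[k]$, i.e.\ that some $S\in{[k]\choose k-t-1}$ lies in no member of $\mathcal M$. Granting this, for every $F\in\ff^*$ the set $F_S$ is a $(k-t-1)$-subset of $F$. It is contained in no other $G\in\ff^*$: otherwise $S\subset \pi(F\cap G)\in\mathcal M$, since $\ff^*$ is $k$-partite. So $F\mapsto F_S$ is an injection from $\ff^*$ into $\partial^{(k-t-1)}(\ff)$, and $|\partial^{(k-t-1)}(\ff)|\ge|\ff^*|\ge c|\ff|$ with $c=c(k,k+1)$, a constant depending only on $k$ because $t<k$.

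The main obstacle is the combinatorial statement: if $\mathcal M\subset 2^{[k]}\setminus\{[k]\}$ is intersection-closed, covers every $(k-t-1)$-set, and is realised by a family with the sunflower property (3), then $\mathcal M$ contains a $t$-set, or at least $\ff^*$ contains two sets meeting in exactly $t$ elements. Intersection-closedness alone does not suffice; the $(t+1)$-star situation of Lemma~\ref{lem9}(b) is exactly a covering configuration with no $t$-set in $\mathcal M$. In that case, however, the $(t+1)$-set $M$ yields a $(k-t-1)$-set, namely the complement of $M$ minus nothing extra, which contains no element of $M$ and hence is not covered, so the covering hypothesis already fails. For the general argument we would use the sunflowers to manufacture new intersection patterns. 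Given $J_1,J_2\in\mathcal M$ and $F\in\ff^*$, take $G$ with $G\cap F=F_{J_1}$. Then, among the $s$ petals of a sunflower with kernel $G_{J_2}$, choose one, $H$, whose petal part avoids the $\le k$ elements of $F\setminus G$. This shows that sizes such as $|J_1\cap J_2|$ and $|J_2|$ are realised, and more generally lets us realise intersections of prescribed $k$-partite shape.

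Using this flexibility, we would first try to fix a $t$-set $T$ and take a minimal $M_0\in\mathcal M$ containing it; such $M_0$ exists because $k-t-1\ge t$. We would then pick a covering set $M_1\supset S$ for a $(k-t-1)$-set $S$ chosen so that $S\cap M_0=T$, which needs $k-t-1\le k-|M_0|+t$. This produces $M_0\cap M_1\in\mathcal M$ with $T\subset M_0\cap M_1\subsetneq M_0$ whenever $M_0\ne T$, contradicting minimality. If that room is lacking, because $|M_0|$ is large, we would fall back on Lemma~\ref{lem9} when $k\ge 2t+3$, and do a separate direct case analysis for $k\in\{2t+1,2t+2\}$.
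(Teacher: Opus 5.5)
Your reduction is sound. With $s=k+1$, the common trace $\mathcal M$ is intersection-closed and has no $t$-element member. Once some $S\in\binom{[k]}{k-t-1}$ is uncovered, the map $F\mapsto F_S$ injects $\mathcal F^*$ into $\partial^{(k-t-1)}(\mathcal F)$. The paper gives no proof of its own; it imports the statement from Frankl--F\"uredi.

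The gap is the step you call the main obstacle, which is the entire combinatorial content. You never prove that an intersection-closed $\mathcal M\subset 2^{[k]}\setminus\{[k]\}$ with no $t$-set must leave some $(k-t-1)$-set uncovered. Your minimality argument fails as written. Choosing $S$ with $S\cap M_0=T$ does not stop the covering member $M_1$ from containing $M_0$, because $|M_0\cup S|=|M_0|+k-2t-1\le k-1$ as soon as $|M_0|\le 2t$. For instance, if $\mathcal M$ contains all proper supersets of a $(t+1)$-set $M_0\supset T$, then $M_1=M_0\cup S$ is a legitimate covering member of $S$, and $M_0\cap M_1=M_0$.

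You also have the cases reversed. The trick does work for $|M_0|\ge 2t+1$ if you take $S\supseteq T\cup([k]\setminus M_0)$: then $S\cup M_0=[k]$, which forces $M_1\not\supseteq M_0$. The hard range is $t+1\le |M_0|\le 2t$, and nothing in the proposal addresses it. The sunflowers cannot supply extra leverage either. Every pairwise intersection in $\mathcal F^*$ projects to a member of $\mathcal M$ of the same size, so the question is purely about intersection-closed families.

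For $k\ge 2t+3$ the gap closes at once, without the minimality argument. Covering all $(k-t-1)$-sets implies covering all $(k-t-2)$-sets, so Lemma~\ref{lem9} applies. Case (a) contradicts $t$-avoidance. In case (b), as you observe, $[k]\setminus M$ is uncovered: any member containing it has more than $t-1$ elements, hence contains $M$, hence equals $[k]$.

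The theorem, however, is claimed for all $k\ge 2t+1$. For $k\in\{2t+1,2t+2\}$, Lemma~\ref{lem9} is unavailable, and its conclusion genuinely fails there. Two examples:
\begin{itemize}
\item $\binom{[k]}{\le t-1}$ with $k=2t+1$;
\item $\{\emptyset,\{1,2\},\{3,4\}\}$ with $k=4$, $t=1$.
\end{itemize}
Each is intersection-closed, has rank $k-t-1$ and no $t$-set, yet is not of type (b). For these two values of $k$ you only promise ``a separate direct case analysis''. As it stands, your argument proves the statement only for $k\ge 2t+3$, which happens to be the only range in which the paper applies it.
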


Lemma~\ref{lem9} gives us information about structure of families with rank at least $k-t-1$. We call the set $M$ as in (b) the \textit{center} of $\mathcal{M}$.

\begin{Claim}\label{cla11}
    Let $\ff \subset \binom{[n]}{k}$ be a $k$-partite family. Suppose that $|\ff| > \binom{n}{k-t-2}$. Then $r(\operatorname{Int}(\ff)) \geqslant k-t-1$.
\end{Claim}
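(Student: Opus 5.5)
The plan is a direct counting (pigeonhole) argument. By the definition of rank, $r(\operatorname{Int}(\ff))\ge k-t-1$ means that every subset of $[k]$ of size at most $k-t-2$ is covered by some member of $\operatorname{Int}(\ff)\setminus\{[k]\}$. Every member of $\operatorname{Int}(\ff)$ is automatically different from $[k]$: since $\ff$ is $k$-partite and its sets are distinct, $|E\cap F|<k$ for $E\neq F$, so $\pi(E\cap F)\ne[k]$. Coverage is also inherited by subsets. So it suffices to show that every $J\subset[k]$ with $|J|=k-t-2$ satisfies $J\subset\pi(E\cap F)$ for some distinct $E,F\in\ff$.

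The key observation uses $k$-partiteness. For $E,F\in\ff$ and $J\subset[k]$, each of $E_J$ and $F_J$ contains exactly one element from each part $X_i$, $i\in J$. Hence $J\subset\pi(E\cap F)$ holds if and only if $E$ and $F$ share their element in every part $X_i$ with $i\in J$, that is, if and only if $E_J=F_J$.

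Now suppose some $J$ with $|J|=k-t-2$ is not covered. Then $E_J\neq F_J$ for all distinct $E,F\in\ff$, so the map $F\mapsto F_J$ is injective on $\ff$. Its image consists of $(k-t-2)$-element subsets of $[n]$, so $|\ff|\le\binom{n}{k-t-2}$. This contradicts the hypothesis, and the claim follows.

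The only point that needs care is the degenerate case $k-t-2=0$. There $J=\emptyset$ must be covered by some member of $\operatorname{Int}(\ff)$, which only requires $\ff$ to contain two distinct sets; this follows from $|\ff|>\binom{n}{0}=1$, so the same argument applies. I do not expect any real obstacle beyond stating the equivalence $J\subset\pi(E\cap F)\iff E_J=F_J$ cleanly.
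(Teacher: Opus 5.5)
Your proposal is correct and follows the paper's proof: an uncovered $J$ with $|J|=k-t-2$ makes $F\mapsto F_J$ injective on $\ff$, which forces $|\ff|\le\binom{n}{k-t-2}$. You also state explicitly two points the paper leaves implicit: the equivalence $J\subset\pi(E\cap F)\iff E_J=F_J$, and the fact that $[k]\notin\operatorname{Int}(\ff)$.
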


\begin{proof}
    Arguing indirectly, assume that $r(\operatorname{Int}(\ff)) < k-t-1$. Then there exists $J \subset 2^{[k]}$ such that $|J|=k-t-2$ and $J$ is uncovered by $\operatorname{Int}(\ff)$. Then all $F_J$ are distinct for $F \in \ff$, and so $|\mathcal{F}| \leqslant \binom{n}{k-t-2}$, contradiction.    
\end{proof}

We will combine Claim~\ref{cla11} with Lemma~\ref{lem9} in our proof of Theorem~\ref{thm5}.

\subsection{Sketch of the proofs}


We begin the proof of Theorem~\ref{thm4} by an iterative application of Theorem~\ref{thm7} in the spirit of the approach of Frankl and F\"uredi \cite{FF}. This allows us to decompose almost all of our family into several disjoint highly structured subfamilies as in Theorem~\ref{thm7}. These subfamilies will be sufficiently large to guarantee that they are not $t$-avoiding. The conditions of Theorem~\ref{thm7} will imply that all sets within these subfamilies have intersections of size $t$. Each intersection is, in fact, a center of a sunflower. Using this fact, we are able to extract a small number of sets that correspond to an almost-dominating set within our graph 
$G(n,k,t)$. We then remove this set and repeat the entire procedure, including the decomposition step. This resembles one of the proofs of Tur\' an's Theorem, in which we iteratively remove small dominating  sets in the graph to obtain a lower bound on the number of edges. 

The proofs of Theorems~\ref{thm5} and~\ref{thm6} have a common proof strategy. First, we prove a harder Theorem~\ref{thm5}~(i), and then modify/simplify its proof to obtain Theorem~\ref{thm5}~(ii) and Theorem~\ref{thm6}. The approach combines decomposition, bootstrapping and peeling arguments in the spirit of Frankl and F\"uredi \cite{FF}. We were also inspired by the paper of Jiang and Longbrake~\cite{JL} who used the Delta-system method for a supersaturation question.
Let us outline the proof of Theorem~\ref{thm5} (i) step by step.  \\
\begin{enumerate}
    \item Using Theorem~\ref{thm8} iteratively, decompose almost all the original family into disjoint regularized subfamilies. Applying Claim~\ref{cla11} and Lemma~\ref{lem9}, we will show that the intersection structure of these subfamilies must necessarily satisfy property (a) or property (b) of Lemma~\ref{lem9}.
    \item Assume that one of these subfamilies is not $t$-avoiding, i.e., if the corresponding  intersection structure satisfies property (a). We then find enough intersections of size $t$ inside that family. In what follows, we assume that all intersection structures of our subfamilies satisfy property (b).
    \item As in \cite{FF} and \cite{JL}, by counting $t$-shadows, we can find a small number of $(t+1)$-stars, which cover a bulk of our family. Recall that family $\ff$ is an $s$-star if there exists a set $S$ such that $|S|=s$ and  $\forall F \in \ff: S \subset F$.
    \item Next, we apply a certain peeling/cleaning procedure to the obtained $(t+1)$-stars. This will allow us to state that, in the resulting family,  almost each common $t$-shadow of the $(t+1)$-star families yields a sufficient  number of $t$-intersections. Via the Kruskal--Katona theorem, this either leads to sufficiently many $t$-intersections, or to the conclusion that one of the $(t+1)$-stars forms a bulk of our family.
    \item We arrive at a situation when one  $(t+1)$-star $\cA$ contains most of the sets, and we have a remainder family $\bb$.  We find sufficiently many edges either between $\mathcal{A}$ and $\mathcal{B}$, or inside the family $\mathcal{B}$ itself. Interestingly, both scenarios in some regimes have a corresponding example with the correct order of the number of edges. We give those in Section~\ref{sec42}.
\end{enumerate}

\section{Proof of Theorem 4}\label{sec3}

Take $\ff \subset \binom{[n]}{k}$ with $|\ff| = \ell.$ Consider the subgraph of $G(n,k,t)$ induced on the family $\mathcal{F}$, and bound its number of edges. For any $\mathcal{S} \subset \ff$ let $G[\mathcal{S}]$ be the subgraph induced on the family $\mathcal{S}$.

Fix an integer $s > 2k$ . Apply Theorem~\ref{thm7} to $\mathcal{F}$ with parameters $s$ and $k$ and obtain a subfamily $\mathcal{F}_1 \subset \mathcal{F}$. Put $\ff_1' :=\ff \setminus \ff_1$ and in the same way find $\mathcal{F}_2 \subset \ff_1'$, and then iteratively $\ff_3, \ldots$, until the remaining family $\ff_i'$ contains at most $\frac{\alpha(G(n,k,t))}{c(k,s)}$ elements, where $c(k,s)$ is the constant from Theorem~\ref{thm7}. Let $m$ be the number of obtained families, and let $\mathcal{F}_{m+1} := \ff_m'$ be the family of remaining sets.

This process yields a decomposition of the original family:
\[
\mathcal{F} = \mathcal{F}_1 \sqcup \mathcal{F}_2 \sqcup \ldots \sqcup \mathcal{F}_m \sqcup \mathcal{F}_{m+1}.
\]
Denote $\mathcal{F}^* = \mathcal{F} \setminus \mathcal{F}_{m+1}$. Our goal is to find a small dominating set in the subgraph induced on $\mathcal{F}^*$. 

Note that, for any $i \in [m]$, the following holds.
\[
|\mathcal{F}_i| > c(k,s) \cdot \frac{\alpha(G(n,k,t))}{c(k,s)} = \alpha(G(n,k,t)).
\]
It implies that a subgraph $G[\ff_i]$ contains at least one edge (i.e., an intersection of size $t$). By condition (2) of Theorem~\ref{thm7}, each set in $\mathcal{F}_i$ is incident to at least one edge. Thus, for every $i \in [m]$ and $F \in \mathcal{F}_i$, there exists a set $A \in \mathcal{F}_i$ such that $|F \cap A| = t$.

For each $t$-element intersection $X$ of two sets in $\mathcal{F}_i$, Theorem~\ref{thm7} guarantees the existence of a sunflower with kernel $X$ and $s$ petals. Select one such sunflower for every $X \in \binom{[n]}{t}$ that is an intersection of two sets from $\ff_i$ for some $i \in [m]$. Let $\mathcal{S}_1 \subset \mathcal{F}^*$ be the family consisting of the selected sunflowers. The number of kernels is at most $\binom{n}{t}$, and the number of petals of each selected sunflower is $s$, so
\[
|\mathcal{S}_1| \leqslant s \cdot \binom{n}{t}.
\]
Every set in $\mathcal{S}_1$ is a petal of at least one sunflower with kernel of size $t$, so as a vertex of $G$, it has at least $s-1$ adjacent vertices within $\mathcal{S}_1$. Moreover, each set in $\mathcal{F}^* \setminus \mathcal{S}_1$ has at least $s - (k-t)$ edges to $\mathcal{S}_1$. Indeed, any set $F \in \mathcal{F}_i \subset \mathcal{F}^*$ has at least one edge within $\mathcal{F}_i$, meaning that there exists $A \in \mathcal{F}_i$ such that $|F \cap A| = t$. Consider the sunflower with kernel $F \cap A$ that was included in $\mathcal{S}_1$. This sunflower has at least $s - (k-t)$ petals that share no elements with $F$ outside $F \cap A$. These petals correspond to vertices in $G[\mathcal{S}_1]$ adjacent to $F$.

Thus, we can lower bound the number of edges that have at least one vertex in $\mathcal{S}_1$ as follows.  
\[ |\mathcal{S}_1| \cdot \frac{s-1}{2} + (s-(k-t))(|\mathcal{F}^*| - |\mathcal{S}_1|) \geqslant(s-(k-t)) \left(\ell - \frac{\alpha(G(n,k,t))}{c(k,d)} - s \binom{n}{t} \right).
\]

Next, we remove $\mathcal{S}_1$ from $\mathcal{F}$. In the same way we decompose $\ff \setminus \mathcal{S}_1$, find $\mathcal{S}_2$ and remove it, repeat the same step iteratively until the remaining family contains at most $\frac{\alpha(G(n,k,t))}{c(k,s)} = o(\ell)$ sets. Let $h$ be the number of iterations of the procedure. In each step we remove at most $s \cdot \binom{n}{t}$ vertices of $G[\ff]$, so 
\[
h \geqslant \left\lfloor \frac{\ell - \frac{\alpha(G(n,k,t))}{c(k,s)}}{s \cdot \binom{n}{t}} \right\rfloor \sim \frac{\ell \cdot t!}{s \cdot n^t}.
\]
At the $i$-th step of the procedure, we find at least
\[
(s-(k-t)) \left(\ell - \frac{\alpha(G(n,k,t))}{c(k,s)} - i \cdot s \binom{n}{t} \right)
\]
new edges in $G[\ff]$.

Thus, we get the bound
$$
\rho(\mathcal{F}) \geqslant (s-(k-t)) \sum_{i=1}^h \left( \ell - \frac{\alpha(G(n,k,t))}{c(k,d)} - i \cdot s \binom{n}{t} \right) \geqslant$$
$$(s-(k-t)) h \cdot \frac{\ell - \frac{\alpha(G(n,k,t))}{c(k,s)} - s \binom{n}{t}}{2} = (1+o(1)) \cdot \frac{l^2}{n^t} \cdot \frac{t!}{2} \cdot \frac{s-(k-t)}{s}.$$

Taking $s \to \infty$, this yields the desired bound.

\section{Proof of Theorem 5}\label{sec4}

We begin the proof with the harder case $r= o(n^{k-t-1})$.

\subsection{Case $r=o(n^{k-t-1})$}\label{sec41}

Consider an arbitrary family $\mathcal{F} \subset \binom{[n]}{k}$ satisfying $|\mathcal{F}| = \ell = \binom{n-t-1}{k-t-1} + r$. In the rest of the proof we will assume that $n$ is sufficiently large.

Let $f=f(n)$ be a very slowly increasing function with positive integer values and such that $f(n) \to \infty$ as $n \to \infty$. Note that, fucntion $f$ depends on $r(n)$ as well. For instance, $f = o\left(\ln\left(\frac{n^{k-t-1}}{r}\right)\right)$ will suit our purposes.

First, we need to obtain a decomposition of $\mathcal{F}$ as in the proof of Theorem~\ref{thm4}. Let $c(k) = c(k,2k)$ be the constant from Theorem~\ref{thm8}. We apply Theorem~\ref{thm8} with $s=2k$ to our family $\mathcal{F}$ and obtain $\mathcal{F}_1$. Then, similarly, apply Theorem 8 to $\ff_1' := \mathcal{F} \setminus \mathcal{F}_1$ and so on. We stop, if the number of the remaining sets is at most $\frac{1}{f(n)} \binom{n}{k-t-1}$. Let $m$ be the number of obtained families, and let $\mathcal{F}_{m+1} := \ff_m'$ be the family of remaining sets.  

This process yields a decomposition of the original family:
\[
\mathcal{F} = \mathcal{F}_1 \sqcup \mathcal{F}_2 \sqcup \ldots \sqcup \mathcal{F}_m \sqcup \mathcal{F}_{m+1}.
\]
Denote $\mathcal{F}^* = \mathcal{F} \setminus \mathcal{F}_{m+1}$. 

By our algorithm, $|\mathcal{F}^*| \geqslant \ell - \frac{1}{f(n)} \binom{n}{k-t-1} \sim \ell.$ Furthermore, for every $i\in[m]$ the inequality 
\begin{align}
|\mathcal{F}_i| \geqslant \frac{c(k)}{f(n)} \binom{n}{k-t-1}
\end{align}
holds. Thus, we get the bound $m =O (f)$.

For sufficiently large $n$, the inequality $|\ff_i| > \binom{n}{k-t-2}$ holds for each $i \in [m]$, and thus we can say that $\operatorname{rank}(\operatorname{Int}(\mathcal{F}_i)) \geqslant k-t-1$ by Claim~\ref{cla11}. This allows us to apply Lemma~\ref{lem9} to $\operatorname{Int}(\ff_i)$. Fix some $i \in [m]$. If condition (a) holds, then we say that $\mathcal{F}_i$ is of type 1, else we say that it is of type 2. The next claim implies that even one family of type 1 contain enough edges to guarantee the bound from the theorem. It is stated in a more general form, so we can apply it later in similar situations.

\begin{Claim}\label{cla12}
If $\cG \subset \binom{[n]}{k}$ is a family obtained as a result of an application of Theorem~\ref{thm8} with $s=2k$ and it is not $t$-avoiding, then 
$$\rho(\cG) \geqslant \frac{(k+t)|\cG|^2}{4k|\partial^{(t)}(\cG)|}.$$
\end{Claim}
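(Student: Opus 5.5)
Since $\cG$ is not $t$-avoiding, some two sets $E,F\in\cG$ satisfy $|E\cap F|=t$, so $J_0:=\pi(E\cap F)\in\operatorname{Int}(\cG)$ with $|J_0|=t$. By property (2) of Theorem~\ref{thm8} (the intersection pattern is the same from every member), every $G\in\cG$ meets some other member of $\cG$ in a set projecting onto $J_0$. Hence for every $G\in\cG$ the $t$-set $G_{J_0}$ is the kernel of a nonempty link $\cG(G_{J_0})$. The plan is to count edges through these $t$-sets, grouping sets by $D=G_{J_0}$.

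For each $t$-set $D$ in $\mathcal{D}:=\{G_{J_0}: G\in\cG\}\subset\partial^{(t)}(\cG)$, consider the star $\cG[D]$ of sets $G$ with $G_{J_0}=D$. Put $a_D:=|\cG[D]|$, so that $\sum_D a_D=|\cG|$. Two sets of $\cG[D]$ form an edge iff their remainders in $\cG(D)$ are disjoint. Property (3) with $s=2k$ says $\cG(D)$ is $2k$-diverse: each vertex lies in at most $a_D/(2k)$ of its $(k-t)$-sets. So a fixed $A\in\cG(D)$ meets at most $(k-t)a_D/(2k)$ members of $\cG(D)$, and is therefore disjoint from at least $a_D\bigl(1-\tfrac{k-t}{2k}\bigr)-1=a_D\tfrac{k+t}{2k}-1$ of them. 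Summing over $A$ and halving, the number of edges inside $\cG[D]$ is at least $\tfrac{a_D}{2}\bigl(\tfrac{(k+t)a_D}{2k}-1\bigr)$.

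The stars $\cG[D]$ are disjoint, since each $G$ determines $D=G_{J_0}$, so the edge sets we count are disjoint. This gives
\[
\rho(\cG)\ \ge\ \sum_{D}\frac{a_D}{2}\Bigl(\frac{(k+t)a_D}{2k}-1\Bigr).
\]
By Cauchy--Schwarz, $\sum_D a_D^2\ge |\cG|^2/|\mathcal{D}|\ge |\cG|^2/|\partial^{(t)}(\cG)|$, so the main term is at least $\frac{(k+t)|\cG|^2}{4k|\partial^{(t)}(\cG)|}$.

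The main obstacle is the subtracted linear term $-\tfrac12\sum_D a_D=-|\cG|/2$. To absorb it, I would not discard $A$ itself as a disjoint partner but use the size bound in property (2): $a_D=|\cG(D)|\ge \max\{2k,\ |\cG|/(2kn^t)\}$. With this, each $A$ has at least $a_D\tfrac{k+t}{2k}$ disjoint partners up to the self-term, and one can check that the self-term is already excluded, because $A$ meets itself in $k-t$ vertices, which the diversity count includes. More carefully: the sets meeting $A$, $A$ included, number at most $(k-t)a_D/(2k)$, so the disjoint ones number at least $a_D(k+t)/(2k)$ with no $-1$. This yields exactly $\rho(\cG)\ge\sum_D \frac{(k+t)a_D^2}{4k}$, and Cauchy--Schwarz finishes the proof.
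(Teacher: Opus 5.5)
Your proposal is correct and follows the paper's proof essentially verbatim: fix a $t$-element $J\in\operatorname{Int}(\cG)$ and partition $\cG$ by $G_J$. Then use the $2k$-diversity of each link (property (3)) to get degree at least $\frac{k+t}{2k}|\cG[X]|$ inside each part, and finish with Cauchy--Schwarz. Your final observation, that $A$ itself is already among the sets counted by the diversity bound, is exactly why no $-1$ appears, so the detour through the size bound in property (2) is unnecessary.
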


\begin{proof}
The family $\cG$ is not $t$-avoiding, and so there exists $J \in \operatorname{Int}(\cG)$ such that $|J|=t.$ Denote
$\mathcal{X} = \{F_J : F \in \cG \}.$
Clearly, $|\mathcal{X}| \leqslant |\partial^{(t)}(\cG)|$ and $\sum_{X \in \mathcal{X}} |\cG[X]| = |\cG|$.

Fix an arbitrary $X \in \mathcal{X}$. By the condition (2) of Theorem~\ref{thm8}, $\cG(X)$ is $2k$-diverse. Consider any $F \in \cG[X]$.  We can lower bound the number of sets $A \in \cG$ such that $F \cap A = X$ as follows:
$$|\cG(X)| - \sum_{i \in F \setminus X} |\cG(X\cup\{i\})| \geqslant |\cG(X)| - \cfrac{k-t}{2k}|\cG(X)| = \cfrac{k+t}{2k}|\cG(X)|.$$
This expression is a lower bound of the degree of each vertex in the subgraph  $G[\cG[X]]$. The total number of edges thus satisfies 
$$\rho(\cG[X]) \geqslant  \cfrac{k+t}{4k}|\cG[X]|^2.$$

Finally, we sum up this bound over all $X \in \mathcal{X}$ and apply the Cauchy-Schwarz inequality. We obtain
$$\rho(\cG) \geqslant \sum_{X \in \mathcal{X}} \rho(\cG[X]) \geqslant \cfrac{k+t}{4k}|\cG[X]|^2 \geqslant \cfrac{k+t}{4k}\cdot \frac{(\sum_{X \in \mathcal{X}} |\cG[X]|)^2}{|\mathcal{X}|}$$
$$\geqslant \frac{(k+t)|\cG|^2}{4k|\partial^{(t)}(\cG)|}.$$

\end{proof}

If for some $i \in [m]$ the family $\ff_i$ is of type 1, then by Claim~\ref{cla11} we get
$$\rho(\mathcal{F}_i) \geqslant \frac{(k+t)|\ff_i|^2}{4k|\partial^{(t)}(\ff_i)|} \geqslant \frac{(k+t)|\ff_i|^2}{4k\binom{n}{t}} = \Omega \left(\frac{n^{2k-3t-2}}{f^2}\right).$$
Since $f^2=o(\frac{n^{k-t-1}}{r})$ we conclude that $\rho(\mathcal{F}_1) \gg rn^{k-2t-1}$ in this case. Let us clarify that by the notation $q(n) \gg p(n)$ we mean, here and henceforth, that $\lim _{n \rightarrow \infty} \frac{q(n)}{p(n)}=\infty.$

In what follows, we assume that for all $ i \in [m]$ the family $\mathcal{F}_{i}$ is of type 2. That is, all these families avoid intersection of size $t$. Let $M_i \in \binom{[k]}{t+1}$ be the center of the corresponding $\mathcal{F}_{i}$. For each $F \in \mathcal{F}_{i}$ denote $C(F) := F_{M_i}$.

Let $C_1, C_2, \ldots, C_p$ be all distinct sets $C(F)$ for $F \in \mathcal{F}^*$. For $i \in [p]$ define
$$\mathcal{G}_i = \{F \in \mathcal{F}^*: C(F) = C_i\}.$$

The next step of the proof is to select a small number of families $\mathcal{G}_i$ such that these families constitute a bulk of $\mathcal{F}^*$.

\begin{Claim}\label{cla13}
    \begin{align}
       \sum_{i=1}^p |\partial^{(t)}(\mathcal{G}_i(C_i))| \leqslant m \binom{n}{t}.
    \end{align}  
\end{Claim}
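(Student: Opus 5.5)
The plan is to split each $\mathcal{G}_i$ according to the pieces $\mathcal{F}_1,\ldots,\mathcal{F}_m$ of the decomposition. Then we show that, inside a single piece $\mathcal{F}_j$, the $t$-shadows coming from different centers are pairwise disjoint. Each piece then contributes at most $\binom{n}{t}$ to the sum, which gives the bound $m\binom{n}{t}$.

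\emph{Splitting.} Since $\mathcal{F}^* = \mathcal{F}_1\sqcup\ldots\sqcup\mathcal{F}_m$, we have $\mathcal{G}_i(C_i)=\bigcup_{j=1}^m (\mathcal{G}_i\cap\mathcal{F}_j)(C_i)$. The shadow of a union is the union of the shadows, so
$$\sum_{i=1}^p |\partial^{(t)}(\mathcal{G}_i(C_i))| \leqslant \sum_{j=1}^m \sum_{i=1}^p |\partial^{(t)}((\mathcal{G}_i\cap\mathcal{F}_j)(C_i))|.$$
It therefore suffices to show that, for each fixed $j$, the inner sum is at most $\binom{n}{t}$. All of these shadows consist of $t$-subsets of $[n]$, so it is enough to prove that for fixed $j$ the families $\partial^{(t)}((\mathcal{G}_i\cap\mathcal{F}_j)(C_i))$, $i\in[p]$, are pairwise disjoint.

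\emph{Disjointness within a piece.} Fix $j$ and suppose a $t$-set $T$ lies in both $\partial^{(t)}((\mathcal{G}_i\cap\mathcal{F}_j)(C_i))$ and $\partial^{(t)}((\mathcal{G}_{i'}\cap\mathcal{F}_j)(C_{i'}))$ with $i\neq i'$. Then there are $F,F'\in\mathcal{F}_j$ with $C(F)=C_i\neq C_{i'}=C(F')$, $T\subset F\setminus C_i$ and $T\subset F'\setminus C_{i'}$. In particular $F\neq F'$ and $T\subset F\cap F'$, so $J:=\pi(F\cap F')\in\operatorname{Int}(\mathcal{F}_j)$ has $|J|=|F\cap F'|\geqslant t$, using $k$-partiteness.

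Now $\mathcal{F}_j$ is of type 2, so by Lemma~\ref{lem9}(b) every member of $\operatorname{Int}(\mathcal{F}_j)$ of size at least $t$ is a superset of the center $M_j$. Hence $M_j\subset J$. By $k$-partiteness, $F$ and $F'$ then share their unique element in every part $X_s$ with $s\in M_j$, so $F_{M_j}=F'_{M_j}$. This says $C(F)=C(F')$, a contradiction.

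The only point needing care is that $C(F)$ is defined relative to the piece containing $F$, since different pieces may have different centers $M_j$. This is exactly why we first split by $j$ and only then compare centers within a single $\mathcal{F}_j$, where $M_j$ is common to all sets involved. With this in place, the argument is a direct combination of the type-2 structure from Lemma~\ref{lem9}(b) with the $k$-partite structure, and I do not expect further obstacles.
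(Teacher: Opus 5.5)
Your proposal is correct and takes the same approach as the paper: split the sum over the pieces $\mathcal{F}_1,\ldots,\mathcal{F}_m$, then show that within a single piece the $t$-shadows attached to different centers are pairwise disjoint. The only difference is in the disjointness step. You read off directly from Lemma~\ref{lem9}(b) that every member of $\operatorname{Int}(\mathcal{F}_j)$ of size at least $t$ contains $M_j$, whereas the paper intersects the superset $M_j\cup\pi(A_2')$ with $\pi(F_1\cap F_2)$ to produce a $t$-element member of $\operatorname{Int}(\mathcal{F}_j)$, contradicting $t$-avoidance. Your version is slightly shorter and gives the stronger conclusion $|F\cap F'|\le t-1$ whenever $C(F)\neq C(F')$.
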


\begin{proof}

Note that
$$\sum_{j=1}^p |\partial^{(t)}(\mathcal{G}_j(C_j))| \leqslant \sum_{i=1}^m \sum_{j=1}^p |\partial^{(t)}((\mathcal{G}_j \cap \ff_i)(C_j))|.$$

Fix an arbitrary $i \in [m]$ and consider the family $\ff_i$ and the corresponding center $M_i$. We will show that $\partial^{(t)}((\mathcal{G}_j \cap \ff_i)(C_j))$ are disjoint for different $j \in [p]$. Then we have
$$ \sum_{j=1}^p|\partial^{(t)}((\mathcal{G}_j \cap \ff_i)(C_j))| \leqslant \binom{n}{t},$$
ans thus, summing up over $i \in [m]$, we get (2).

To show that the shadows are disjoint, we need to check that if $F_1, F_2 \in \mathcal{F}_i$ have different centers (i.e. $C(F_1) \neq C(F_2)$), then $F_1 \setminus C(F_1)$ and $F_2 \setminus C(F_2)$ share less than $t$ elements. Denote $A = F_1 \cap F_2$, $A_1 = C(F_1) \cap C(F_2)$ and $A_2 = A \setminus A_1$. Note that $\pi(A) \in \operatorname{Int}(\mathcal{F}_i)$ and $|A_1| \leqslant t$ holds, since $A_1 \subsetneq C(F_1)$ and $|C(F_1)|=t+1$. Clearly, we have $\pi(A_1) \subset M_i$ and $\pi(A_2) \cap M_i = \emptyset$. Our aim is to prove that $|A_2| < t$. 

Arguing indirectly, assume that $|A_2| \geqslant t$. Let $A_2'$ be any subset of $A_2$ such that $|A_2'|= t - |A_1|.$ By the condition (b) of the Lemma~\ref{lem9} we say that each superset of $M_i$ belongs to $\operatorname{Int}(\mathcal{F}_i)$, so $(M_i \cup \pi(A_2')) \in \operatorname{Int}(\mathcal{F}_i)$. Recall that $\operatorname{Int}(\mathcal{F}_i)$ is closed under intersection. Therefore, $(M_i \cup \pi(A_2')) \cap \pi(A) = \pi(A_1) \sqcup \pi(A_2')$ is in $\operatorname{Int}(\mathcal{F}_i)$ as well. However, $|\pi(A_1) \sqcup \pi(A_2')| = t$, which contradicts the fact that $\mathcal{F}_i$ is $t$-avoiding.  
\end{proof}

Assume that $|\partial^{(t)}(\mathcal{G}_i(C_i))| = \binom{x_i}{t},$ where $x_i$ are nonnegative real numbers, and that $x_1 \geqslant x_2 \geqslant \ldots \geqslant x_p.$

Denote $h=f^{t+1}$.
\begin{Claim}\label{cla14}
    $$\sum_{i=1}^h |\mathcal{G}_i| \geqslant (1 - o(1) )  \binom{n}{k-t-1}.$$
\end{Claim}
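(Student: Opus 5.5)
We prove that the $h=f^{t+1}$ families $\mathcal{G}_i$ with the largest shadows already contain almost all of $\binom{n}{k-t-1}$ sets. The plan is to bound each $|\mathcal{G}_i|$ from above through its $t$-shadow using Kruskal--Katona, then use Claim~\ref{cla13} to control how much the families beyond the first $h$ can contribute.

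\textbf{Step 1 (size versus shadow).} Each $\mathcal{G}_i(C_i)$ is a family of $(k-t-1)$-sets, with $|\mathcal{G}_i| = |\mathcal{G}_i(C_i)|$ since the sets of $\mathcal{G}_i$ all contain $C_i$. By the Lov\'asz form of Kruskal--Katona, if $|\mathcal{G}_i(C_i)|=\binom{y_i}{k-t-1}$, then $|\partial^{(t)}(\mathcal{G}_i(C_i))|\ge\binom{y_i}{t}$. Hence $y_i\le x_i$, because $k-t-1>t$ when $k\ge 2t+3$. This gives
\[
|\mathcal{G}_i|\le\binom{x_i}{k-t-1}.
\]
Note also that $x_i\le n$, since the shadow is at most $\binom{n}{t}$.

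\textbf{Step 2 (the tail is small).} By Claim~\ref{cla13}, $\sum_i\binom{x_i}{t}\le m\binom{n}{t}=O(f)\binom{n}{t}$. Since the $x_i$ are ordered decreasingly, for $i>h$ we get
\[
\binom{x_i}{t}\le \frac{1}{h}\,O(f)\binom{n}{t}=O\!\left(f^{-t}\right)\binom{n}{t}.
\]
It follows that $x_i=O(n/f)$ for $i>h$. For these $i$ we then bound
\[
|\mathcal{G}_i|\le\binom{x_i}{k-t-1}\le\binom{x_i}{t}\cdot O\!\left(x_i^{\,k-2t-1}\right),
\]
using that the ratio $\binom{x}{k-t-1}/\binom{x}{t}$ is $O(x^{k-2t-1})$. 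Summing over $i>h$ gives
\[
\sum_{i>h}|\mathcal{G}_i|\le O\!\left((n/f)^{k-2t-1}\right)\cdot O(f)\binom{n}{t}=O\!\left(n^{k-t-1}f^{-(k-2t-2)}\right).
\]
Since $k-2t-2\ge1$, this is $o(n^{k-t-1})$. Small $x_i$ (say $x_i<k$) need care, since $\binom{x}{t}$ may vanish while the family is nonempty. Such families have $|\mathcal{G}_i|\le$ const, so they are negligible only if their number is controlled. I would handle this by noting that $|\partial^{(t)}|\ge 1$ for any nonempty family, so such a family has $\binom{x_i}{t}\ge1$. The inequality $\binom{x}{k-t-1}\le C x^{k-2t-1}\binom{x}{t}$ then still holds for $x\ge k-t-1$, and families with $x_i<k-t-1$ are empty by Kruskal--Katona.

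\textbf{Step 3 (conclusion).} Since $\mathcal{F}^*=\bigsqcup_i\mathcal{G}_i$, we obtain
\[
\sum_{i\le h}|\mathcal{G}_i|\ge|\mathcal{F}^*|-o(n^{k-t-1})\ge \ell-\tfrac1f\binom{n}{k-t-1}-o(n^{k-t-1}).
\]
With $\ell\ge\binom{n-t-1}{k-t-1}=(1-o(1))\binom{n}{k-t-1}$, this is $(1-o(1))\binom{n}{k-t-1}$.

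The main obstacle is Step 2: getting the right exponent trade-off from the choice $h=f^{t+1}$. The tail shadow per family must drop by a factor larger than $f$, enough to beat the $m=O(f)$ multiplicity in Claim~\ref{cla13}, and the convexity gain $x^{k-2t-1}$ must convert this into an $o(1)$ loss. One should check that the exponent $k-2t-2\ge1$ really gives a negative power of $f$; it does, so the argument goes through.
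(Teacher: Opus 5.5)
Your argument is correct and is essentially the paper's proof. Kruskal--Katona gives $|\mathcal{G}_i|\le\binom{x_i}{k-t-1}$, Claim~\ref{cla13} together with the decreasing ordering forces $x_i=O(n/f)$ for $i>h$, and the ratio bound $\binom{x}{k-t-1}/\binom{x}{t}=O(x^{k-2t-1})$ with $k-2t-2\ge 1$ then makes the tail $o(n^{k-t-1})$; one minor point is that $y_i\le x_i$ in Step 1 follows from monotonicity of $\binom{x}{t}$, while $k-t-1>t$ is only what makes the $t$-shadow form of Kruskal--Katona applicable.
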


\begin{proof}

Since $x_1 \geqslant \ldots \geqslant x_p$, Claim~\ref{cla13} implies $ |\partial^{(t)}(\mathcal{G}_{h+1}(C_{h+1}))|  =  \binom{x_{h+1}}{t}  \leqslant \frac{m}{h+1} \binom{n}{t}.$ This inequality implies 
\begin{align}
x_{h+1} - t + 1 \leqslant\left(\frac{m}{h+1}\right)^{1/t} n = O\left(\frac{n}{f}\right).
\end{align}

By the Kruskal-Katona Theorem we obtain
$$|\mathcal{G}_i(C_i)| \leqslant\binom{x_i}{k-t-1}.$$

For $i > h+1$ we have $x_i \leqslant x_{h+1},$ and so $ \frac{\binom{x_{i}}{k-t-1}}{\binom{x_{i}}{t}} \leqslant \frac{\binom{x_{h+1}}{k-t-1}}{\binom{x_{h+1}}{t}}$ holds. Therefore, 
$$\sum_{i>h} |\mathcal{G}_i(C_i)| \leqslant\sum_{i>h} \binom{x_i}{k-t-1} = \sum_{i>h} \binom{x_i}{t} \frac{\binom{x_i}{k-t-1}}{\binom{x_i}{t}} \leqslant m \binom{n}{t} \frac{\binom{x_{h+1}}{k-t-1}}{\binom{x_{h+1}}{t}}$$

$$\leqslant m \binom{n}{t} \frac{t!x_{h+1}^{k-2t-1}}{(k-t-1)!}.$$

Inequality (2) implies $x_{h+1}^{k-2t-1} = o(\frac{n^{k-2t-1}}{f}).$ Hence,
$$ m \binom{n}{t} \frac{t!x_{h+1}^{k-2t-1}}{(k-t-1)!} = o(n^{k-t-1}).$$

$$\sum_{i=1}^h|\mathcal{G}_i| = |\ff^*| - \sum_{i>h} |\mathcal{G}_i(C_i)| \geqslant |\ff^*| - m \binom{n}{t} \frac{\binom{x_{h+1}}{k-t-1}}{\binom{x_{h+1}}{t}} = (1-o(1))\binom{n}{k-t-1}.$$

\end{proof}

Now we consider only the families $\mathcal{G}_i$ for $i \in [h]$ that form a bulk of $\mathcal{F}$. Next, we will apply a peeling procedure to these families. 

Denote $W = \bigcup_{i=1}^h C_i$. Note that $|W| \leqslant h(t+1) = o(n^\varepsilon)$ for any constant $\varepsilon>0$. Define another slowly increasing function $\tau(n) = hf = f^{t+2}.$

\begin{Claim}\label{peel}  For all $i \in [h]$ there exist families $\mathcal{G}_i' \subset  \mathcal{G}_i$ such that the following properties hold.\\
(1) $\sum_{i=1}^h |\mathcal{G}_i| - \sum_{i=1}^h |\mathcal{G}_i'| = o(n^{k-t-1}).$\\
(2) For each $i \in [h]$ and $G \in \mathcal{G}_i'$, $G \cap W = C_i.$\\
(3) For each $i \in [h]$ and $X \in \partial^{(t)}(\cG_i'(C_i)),$ $|\cG_i'(C_i \cup X)| \geqslant \frac{n^{k-2t-1}}{\tau}.$\\
(4) For each $i \in [h]$ and $X \subset [n] \setminus W$ with $|X| \leqslant t$, the family $\mathcal{G}_i'(C_i \cup X)$ is $\tau$-diverse.

\end{Claim}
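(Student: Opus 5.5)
We build the families $\cG_i'$ by an iterative deletion (peeling) procedure. Each time we delete a set, we charge it to an object of small "cost", and then we bound the total number of deletions.

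\textbf{Step 1 (property (2)).} For each $i\in[h]$ remove from $\cG_i$ every $G$ with $(G\setminus C_i)\cap W\neq\emptyset$. Such a $G$ contains $C_i$ and some $w\in W\setminus C_i$. There are at most $h$ choices of $i$ and at most $|W|\le h(t+1)$ choices of $w$, and each pair $(C_i,w)$ lies in at most $\binom{n}{k-t-2}$ sets. The total loss is therefore $O(h^2 n^{k-t-2})=o(n^{k-t-1})$, since $h=f^{t+1}=n^{o(1)}$.

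\textbf{Step 2 (properties (3) and (4) by peeling).} Repeat the following two operations, in any order, until neither applies.

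(a) If for some $i$ and some $X\in\partial^{(t)}(\cG_i'(C_i))$ we have $0<|\cG_i'(C_i\cup X)|<n^{k-2t-1}/\tau$, delete all of $\cG_i'[C_i\cup X]$.

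(b) If for some $i$, some $X\subset[n]\setminus W$ with $|X|\le t$, and some $v\notin C_i\cup X$, we have $|\cG_i'(C_i\cup X\cup\{v\})|>\frac1\tau|\cG_i'(C_i\cup X)|$, delete all of $\cG_i'[C_i\cup X]$.

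Any family that survives satisfies (2)--(4); for (3) this holds because every $X$ in the current shadow then has large degree. Deletions only shrink the families, so the procedure terminates.

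\textbf{Step 3 (bounding the loss).} For type (a), each pair $(i,X)$ triggers at most once, because after the deletion $X$ has degree $0$. Using Claim~\ref{cla13} (the shadows $\partial^{(t)}(\cG_i(C_i))$ have total size at most $m\binom nt$), the loss is at most
\[
m\binom{n}{t}\cdot \frac{n^{k-2t-1}}{\tau}=O\!\left(\frac{f}{\tau}\,n^{k-t-1}\right)=O\!\left(\frac{n^{k-t-1}}{h}\right)=o(n^{k-t-1}).
\]
Type (b) is the main obstacle, since $X$ ranges over sets of all sizes $j=|X|\le t$. Fix $i$ and $j$, and let $X$ trigger operation (b) with witness $v$. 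Then
\[
|\cG_i'[C_i\cup X]|<\tau\,|\cG_i'[C_i\cup X\cup\{v\}]|\le \tau\binom{n}{k-t-2-j}.
\]
Summing over the at most $\binom{n}{j}$ choices of $X$ and over the $h$ indices $i$ gives a loss of $O(h\tau\, n^{k-t-2})=o(n^{k-t-1})$, because $h\tau=f^{2t+3}=n^{o(1)}$. This crude count works because $\binom nj\binom{n}{k-t-2-j}=O(n^{k-t-2})$, which is one power of $n$ below $n^{k-t-1}$.

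If instead one wants (4) to mirror the $2k$-diversity that Theorem~\ref{thm8} gives at $|X|=t$ together with the bound (2) on $|\cG_i(C_i)|$, the same charging argument still goes through. The point to check is that the degree-$(j+1)$ count always loses a factor of $n$ relative to the target $n^{k-t-1}$, which absorbs the polylogarithmic-type factors $h\tau$.

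Adding the losses from Steps 1 and 3 gives (1), since $h$ and $\tau$ grow slower than any power of $n$. The delicate point is to order the operations so that each deletion is charged once. Charging each type (a) deletion to the pair $(i,X)$, and each type (b) deletion to the triple $(i,X,v)$ at the moment it occurs, handles this, because each such pair or triple can trigger only once: after the deletion the corresponding subfamily is empty.
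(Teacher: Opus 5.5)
Your proposal is correct and follows the paper's proof essentially step for step. First you restrict to sets with $G\cap W=C_i$. Then you peel off $\cG_i'[C_i\cup X]$ whenever (3) or (4) fails for some $(i,X)$, charging at most one deletion to each pair, with deletions triggered by (4) bounded by $\tau\binom{n}{k-t-2-|X|}$ and those triggered by (3) bounded by $n^{k-2t-1}/\tau$.

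The differences are minor and harmless. You bound the number of (3)-triggers via Claim~\ref{cla13} by $m\binom{n}{t}$ rather than the paper's $h\binom{n}{t}$. You also explicitly include $X=\emptyset$, which the paper's sum over $j\ge 1$ leaves out.
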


\begin{proof}
First, for $i \in [h]$ define
$$\mathcal{G}_i' := \{G \in \mathcal{G}_i: G \cap W = C_i\}.$$
Clearly, every set in $\mathcal{G}_i \setminus \mathcal{G}_i'$ has at least $t+2$ elements in $W$, and so $|\cG_i \setminus \cG_i'| \leqslant  \binom{|W|}{t+2}\binom{n}{k-t-2}.$ We have $ \sum_{i=1}^h   |\cG_i \setminus \cG_i'| \leqslant   h\binom{|W|}{t+2}\binom{n}{k-t-2}=o(n^{k-t-1})$ sets. Thus, the families $\mathcal{G}_i'$ satisfy conditions (1) and (2). 

Next, we run the following peeling procedure. If some $\cG_i'$ and $X \in \binom{[n] \setminus W}{\leq t}$ violate condition (3) or (4), then put $\cG_i' := \cG_i' \setminus \cG_i'[X]$. If such $i$ and $X$ do not exist, then the process stops. Clearly, once the process stops, properties (3) and (4), as well as (2), are satisfied. Let us upper bound the number of removed sets.

Assume that in our process we find some $X \in \binom{[n] \setminus W}{\leq t}$ and $i \in [h]$ such that $\mathcal{G}_i'(C_i \cup X)$ is not $\tau$-diverse. Then there exists $j \in [n] \setminus (C_i \cup X)$ such that $|\mathcal{G}_i'(C_i \cup X \cup \{j\})| \geqslant \frac{1}{\tau} |\mathcal{G}_i'(C_i \cup X)| = \frac{1}{\tau} |\mathcal{G}_i'(X)|.$ Clearly, we have $|\mathcal{G}_i'(C_i \cup X \cup \{j\})| \leqslant \binom{n}{k-t-2-|X|}$. Therefore, every such deleted family $\mathcal{G}_i'[X]$ satisfies $|\mathcal{G}_i'[X]|\leqslant \tau \cdot \binom{n}{k-t-2-|X|}$.

If we found some $i \in [h]$ and $X \in \binom{[n] \setminus W}{t}$ that violate condition (3), then we, obviously, remove at most $\frac{n^{k-2t-1}}{\tau}$ sets in this step.

It is clear that we can make at most one deletion for each $i \in [h]$ and $X \in \binom{[n] \setminus W}{\leq t}$. Thus, the total number of removed sets does not exceed 
$$h\tau \sum_{j=1}^t \binom{n}{j}\binom{n}{k-t-2-j} + h\binom{n}{t}\frac{n^{k-2t-1}}{\tau}  = o(n^{k-t-1}),$$
which means that property (1) still holds at the end of the procedure.
\end{proof}

Apply Claim~\ref{peel} to our families $\mathcal{G}_i$ and get families $\mathcal{G}_i'$ with "regularized" properties. Denote $\mathcal{G} = \bigcup_{i \in [h]} \mathcal{G}_i'.$

For any families $\cA$ and $\bb$ we say that $\rho(\cA,\bb)$ is the number of unordered pairs $A \in \cA$ and $B \in \bb$ such that $|A \cap B|=t$. Fix distinct $i, j \in [h]$ and assume that $|C_i \cap C_j| = t-x$ for some $x \geqslant 0$. Our aim is to lower bound $\rho(\cG_i', \cG_j')$.

Fix an arbitrary $G \in \mathcal{G}_i'$ and some $X \subset G \setminus C_i$ such that $|X|=x$. Note that, by Claim~\ref{peel} (2), $X \subset [n] \setminus W$ and for each $H \in \cG_j'$, we have $H \cap G = (C_i \cap C_j) \sqcup ((G \setminus C_i) \cap (H \setminus C_j)).$ By Claim~\ref{peel} (4), applied for $\cG_j'$, we have at least
\begin{align*}
    |\mathcal{G}_j'(C_j \cup X)| - \sum_{a \in G \setminus (C_i \cup X)} |\mathcal{G}_j'( C_j \cup X \cap \{a\})| \\\geqslant|\mathcal{G}_j'(C_j \cup X)| - \cfrac{k}{\tau}|\mathcal{G}_j'(C_j \cup X)|
    = \cfrac{\tau - k}{\tau} \cdot |\mathcal{G}_j'(C_j \cup X)|    
\end{align*}
sets $F \in \mathcal{G}_j'$ such that $F \cap G = X \cup (C_i \cap C_j)$, so $|F \cap G|=t$ in this case. 

Summing up this inequality over all $G$ and $X$, we obtain
$$
\rho(\mathcal{G}_i', \mathcal{G}_j') \geqslant\sum_{G \in \mathcal{G}_i'} \sum_{\substack{X \subset G \setminus C_i,\\ |X|=x}} \frac{\tau - k}{\tau} \cdot |\mathcal{G}_j'(X)| = \frac{\tau - k}{\tau}  \sum_{\substack{X \subset [n] \setminus W,\\ |X|=x}} |\mathcal{G}_i'(X)||\mathcal{G}_j'(X)|.
$$

Clearly, inequality
$$\sum_{\substack{X \subset [n] \setminus W,\\ |X|=x}} |\mathcal{G}_i'(X)||\mathcal{G}_j'(X)| \geqslant \sum_{\substack{X \subset [n] \setminus W,\\ |X|=t}} |\mathcal{G}_i'(X)||\mathcal{G}_j'(X)|$$
holds. So we have
$$\rho(\cG_i', \cG_j') \geqslant \frac{\tau-k}{\tau} \sum_{\substack{X \subset [n] \setminus W,\\ |X|=t}} |\mathcal{G}_i'(X)||\mathcal{G}_j'(X)|$$
for every distinct $i,j \in [h]$. In a view of Claim~\ref{peel} (3), for every common set $X\in (\partial^{(t)}(\cG_i') \cap \partial^{(t)}(\cG_j'))$ we get at least
\begin{align}
    \frac{\tau-k}{\tau}\left(\frac{n^{k-2t-1}}{\tau}\right)^2    
\end{align}
edges.

If $\sum_{i=1}^h |\partial^{(t)}(\cG_i')| \geqslant \binom{n}{t} + \frac{r\tau^3}{n^{k-2t-1}}$, then we obtain $\sum_{i<j, i,j \in [h]} |(\partial^{(t)}(\cG_i') \cap \partial^{(t)}(\cG_j'))| \geqslant \frac{r\tau^3}{n^{k-2t-1}}$. Together with (4), it implies
$$\rho(\cG) \geqslant \frac{r\tau^3}{n^{k-2t-1}} \cdot \frac{\tau-k}{\tau}\left(\frac{n^{k-2t-1}}{\tau}\right)^2 \gg rn^{k-2t-1},$$
and we are done. So we can assume that 
\begin{equation}
    \sum_{i=1}^h |\partial^{(t)}(\cG_i')| < \binom{n}{t} +  \frac{r\tau^3}{n^{k-2t-1}} \sim \binom{n}{t},    
\end{equation}
where the last equality is since $\tau^3 = o(\frac{n^{k-2t-1}}{r})$ by our choice of $\tau$.

Without loss of generality, assume that for $i \in [h]$ equality $|\cG_i'|=\binom{y_i}{k-t-1}$ holds, where $y_1 \geqslant y_2 \geqslant \ldots \geqslant y_h$. Our next goal is to prove that $y_1 = (1+o(1))n$, or, put differently, that $|\cG_1'| \sim \ell.$ Roughly speaking, it means that almost all of our sets must contain the same $(t+1)$-element center.

By the Kruskal-Katona Theorem and the inequality (5), we have $$ \sum_{i=1}^h \binom{y_i}{t} \leqslant  \sum_{i=1}^h |\partial^{(t)}(\cG_i')| \leqslant \binom{n}{t} + o(n^t).$$
 
By Claim~\ref{cla14} we obtain

$$\binom{n}{k-t-1} + o(n^{k-t-1}) \leqslant \sum_{i=1}^h |\cG_i'| =\sum_{i=1}^h \frac{\binom{y_i}{k-t-1}\binom{y_i}{t}}{\binom{y_i}{t}} $$ 
$$\leqslant \sum_{i=1}^h \binom{y_i}{t} \frac{\binom{y_1}{k-t-1}}{\binom{y_1}{t}} \leqslant 
\left(\binom{n}{t} + o(n^t)\right) \frac{\binom{y_1}{k-t-1}}{\binom{y_1}{t}}.$$

It implies $y_1 \sim n$ and, consequently, $|\cG_1'| = \binom{y_1}{k-t-1} \sim \binom{n}{k-t-1}$. 

Now we can return to our original family $\ff$. Without loss of generality, we say that $C_1=[t+1].$ Define
$$\cA = \{F \in \ff : [t+1] \subset F \}, \quad \bb = \ff \setminus \cA.$$
Since $\cG_1' \subset \cA$, we have $|\bb| = o(n^{k-t-1})$. Thus, almost all sets in $\ff$ form an independent set $\cA$ in graph $G[\ff]$. Further, we will prove that there exists a sufficiently large number of edges either inside $\bb$, or between $\cA$ and $\bb$.

Note that $|\cA| \leqslant \binom{n-t-1}{k-t-1}$ implies $|\bb| \geqslant r.$ Denote
$$\cA' = \left\{F \notin \ff \mid [t+1] \subset F, F \in \binom{[n]}{k} \right\}.$$
We have $|\cA'| = |\bb| -r.$

Consider an arbitrary set $B \in \bb$. If $B \cap [t+1] = \emptyset$, then it has exactly $\binom{k}{t}\binom{n-k-t-1}{k-2t-1}$ neighbors in the family $\cA \sqcup \cA'$. If $B \cap [t+1] \neq \emptyset$, then the number of neighbors is at least $cn^{k-2t}$ for some constant $c>0$. Put $d=\binom{n-k-t-1}{k-2t-1}.$ Let $\bb'\subset \bb$ be the family of all sets in $\bb$, which have at most $\frac{d}{2}$ neighbors in $\cA$. If $|\bb'| < \frac{|\bb|}{2}$, then we have at least $\frac{d}{2} \cdot \frac{|B|}{2} \geqslant \frac{rd}{4}$ edges induced between $\bb$ and $\cA$, and we are done. So we can assume that $|\bb| \leqslant 2|\bb'|$.

\begin{Claim}\label{cla16}
    $$|\bb| \geqslant \frac{|\partial^{(t)}(\mathcal{B'})|d}{2\binom{k}{t}}.$$
\end{Claim}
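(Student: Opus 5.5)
We double count pairs $(B,A')$ with $B\in\bb'$, $A'\in \cA'$ and $|A'\cap B|=t$, and use the identity $|\cA'|=|\bb|-r\le|\bb|$.

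\emph{Lower bound for $|\cA'|$.} Fix $B\in\bb'$. Since $B\notin\cA$, we have $[t+1]\not\subset B$. If $B\cap[t+1]=\emptyset$, then $B$ has exactly $\binom{k}{t}d$ neighbours in $\cA\sqcup\cA'$. If $B\cap[t+1]\neq\emptyset$, it has at least $cn^{k-2t}\ge \binom{k}{t}d$ such neighbours for large $n$. By definition of $\bb'$, at most $d/2$ of these neighbours lie in $\cA$. Hence $B$ has at least $(\binom{k}{t}-\frac12)d\ge \frac{\binom{k}{t}d}{2}$ neighbours in $\cA'$. It is cleaner to record this per $t$-subset. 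For $B$ disjoint from $[t+1]$ and each $X\in\binom{B}{t}$, the sets $A\supset[t+1]\cup X$ with $A\cap B=X$ number exactly $d$. These sets are disjoint for distinct $X$, which is why the count is $\binom{k}{t}d$.

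\emph{Cleaner route via the shadow.} For every $X\in\partial^{(t)}(\bb')$, pick some $B\in\bb'$ containing $X$. Consider the family $\mathcal{N}_X$ of sets $A\supset[t+1]\cup X$ with $A\cap B=X\cup(B\cap[t+1])$.

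When $B\cap[t+1]=\emptyset$, we have $|\mathcal{N}_X|=d$. In general, to keep $|A\cap B|=t$ one should use a $t$-subset $X'\subset B\setminus[t+1]$ of size $t-|B\cap[t+1]|$. I would handle this by restricting to the $X\in\partial^{(t)}(\bb')$ coming from sets disjoint from $[t+1]$. The remaining sets of $\bb'$ meet $[t+1]$, and their contribution can be absorbed because each such set has $\Omega(n^{k-2t})\gg d$ neighbours in $\cA\sqcup\cA'$. So these sets force $|\cA'|$ to be large even more easily, and I would treat that case separately by the same counting.

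The important point is that for distinct $X\ne X'\subset[n]\setminus[t+1]$, the families $\mathcal{N}_X$ and $\mathcal{N}_{X'}$ can intersect. However, a $k$-set $A\supset[t+1]$ is counted only for the $t$-subsets $X$ of $A\setminus[t+1]$, and $A$ has only $\binom{k-t-1}{t}$ of these. So the overlap multiplicity is bounded by a constant.

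\emph{Removing $\cA$.} At most $d/2$ members of $\mathcal{N}_X$ can lie in $\cA$, since they are neighbours of $B\in\bb'$. Thus at least $d/2$ of them lie in $\cA'$. This gives $\sum_X|\mathcal{N}_X\cap\cA'|\ge|\partial^{(t)}(\bb')|\,d/2$.

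\emph{Main obstacle: the constant.} Dividing by the multiplicity yields $|\bb|\ge|\cA'|\ge|\partial^{(t)}(\bb')|d/(2\binom{k-t-1}{t})$. Since $\binom{k-t-1}{t}\le\binom{k}{t}$, this is at least the claimed bound. The main thing to verify carefully is the multiplicity bound. A set $A\in\cA'$ may be counted from several $X$, via different witnesses $B$. It is counted from $X$ only if $X\subset A\setminus[t+1]$, which gives at most $\binom{k-t-1}{t}$ values of $X$, and for each $X$ only one witness $B$ was chosen. This closes the argument.

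The other point to verify is the bookkeeping for sets $B$ meeting $[t+1]$. There I would replace $X$ by $X\setminus$ (any elements of $[t+1]$), consider $A\supset[t+1]\cup X$, and require $A$ to avoid $B\setminus X$. The count is then still at least $d$ for large $n$, because fewer elements are prescribed.
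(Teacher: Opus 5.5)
Your core double count is exactly the paper's argument. For each $X\in\partial^{(t)}(\bb')$ you pick a witness $B\in\bb'$ and exhibit $d$ sets $A\supset[t+1]$ with $A\cap B=X$. At most $d/2$ of these lie in $\cA$, since they are neighbours of $B$, and you then divide by the multiplicity. The paper divides by $\binom{k}{t}$. Your sharper $\binom{k-t-1}{t}$ is valid only for $X$ disjoint from $[t+1]$, so it cannot be used for the whole shadow.

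You are right that sets $B$ meeting $[t+1]$ need care. The paper's sentence ``there exist at least $d$ sets $A\in\cA\sqcup\cA'$ with $A\cap B=D$'' silently assumes $B\cap[t+1]\subset D$. However, your treatment of this case has a genuine gap. Let $J=B\cap[t+1]\neq\emptyset$, and suppose $J\not\subset X$ for every witness $B$ of $X$.
\begin{itemize}
\item Your last paragraph asks for $A\supset[t+1]$ avoiding $B\setminus X$. Since $J\subset A$ and $\emptyset\neq J\setminus X\subset B\setminus X$, no such $A$ exists, so the count is $0$, not $\ge d$.
\item If $A$ only avoids $B\setminus(X\cup[t+1])$, then $|A\cap B|=|X\setminus[t+1]|+|J|>t$. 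So these $A$ are not neighbours of $B$, and $B\in\bb'$ gives no bound on how many lie in $\cA$. Hence the step ``at least $d/2$ of $\mathcal N_X$ lie in $\cA'$'' and the bound $\sum_X|\mathcal N_X\cap\cA'|\ge|\partial^{(t)}(\bb')|d/2$ are unjustified.
\item Absorbing these sets because each has $\Omega(n^{k-2t})$ neighbours does not work either. Per-set neighbour counts cannot be summed, since different $B$ share neighbours. Meanwhile the $t$-sets these $B$ generate still appear on the right-hand side. There can be order $n^{|J|}$ times more of them than shadow sets containing $J$.
\end{itemize}

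A charging step fixes this. Let $X$ have witness $B$ and set $j=|J|$. If $J\not\subset X$, choose $Z\subset X\setminus[t+1]$ with $|Z|=t-j$ (possible since $|X\cap J|<j$) and put $X^*=J\cup Z\subset B$. If $J\subset X$, put $X^*=X$.
\begin{itemize}
\item Since $B\cap[t+1]\subset X^*$, there are exactly $\binom{n-k-t-1+j}{k-2t-1+j}$ sets $A\supset[t+1]$ with $A\cap B=X^*$, all neighbours of $B$. Hence $|\cA'[X^*]|\ge\binom{n-k-t-1+j}{k-2t-1+j}-d/2$. This is $d/2$ for $j=0$ and $\Omega(n^{j}d)$ for $j\ge 1$.
\item $X^*$ determines $j=|X^*\cap[t+1]|$. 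Then $X$ is recovered by choosing $X\cap[t+1]\subset X^*\cap[t+1]$ and at most $j$ further elements. So only $O(n^{j})$ sets $X$ are charged to each $X^*$.
\item Each $A\in\cA'$ contains only $\binom{k}{t}$ sets $X^*$.
\end{itemize}
Together these give $|\bb|\ge|\cA'|\ge c(k,t)\,d\,|\partial^{(t)}(\bb')|$ for some $c(k,t)>0$. This constant may be worse than $1/(2\binom{k}{t})$. However, only the order of magnitude of the bound is used afterwards, in both the $t$-avoiding and non-$t$-avoiding cases for $\bb''$, so it suffices there.
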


\begin{proof}
    Consider an arbitrary $D \in \partial^{(t)}(\mathcal{B'})$. Fix some set $B \in \mathcal{B'}$ such that $D \subset B'$. There exist at least $d=\binom{n-k-t-1}{k-2t-1}$ sets $A \in (\cA \sqcup \cA')$ such that $A \cap B = D$. Since $B$ has at most $\frac{d}{2}$ neighbors in $\cA$, we obtain $|\cA'[D]| \geqslant \frac{d}{2}$. Let us double count this bound over all $D \in \partial^{(t)}(\mathcal{B'})$. We have
$$ |\mathcal{A}'| = \frac{\sum_{D \in \binom{[n]}{t}}|\cA'[D]|}{\binom{k}{t}} \geqslant \frac{\sum_{D \in \partial^{(t)}(\mathcal{B'})}|\cA'(D)|}{\binom{k}{t}} \geqslant \frac{|\partial^{(t)}(\mathcal{B'})|d/2}{\binom{k}{t}}.$$
Note that $|\bb| \geqslant |\cA'|$ and we are done.
\end{proof}

Let us apply Theorem~\ref{thm8} to the family $\bb'$ with $c(k,2k)$ and obtain a family $\bb'' \subset \bb'$, where $|\bb''| \geqslant c(k,2k)|\bb'|$. We will distinguish two cases, depending on whether $\bb''$ is $t$-avoiding or not.

First, assume that $\bb''$ is $t$-avoiding. In this case, Theorem 10 implies that there exists a constant $c$ such that $|\bb''| \leqslant  c|\partial^{(k-t-1)}(\bb'')|.$ Take $x$ such that $|\partial^{(k-t-1)}(\mathcal{B''})| = \binom{x}{k-t-1}.$ Since $|\bb'| =o(n^{k-t-1})$ we must have  $x=o(n)$. By the Kruskal-Katona Theorem, $|\partial^{(t)}(\mathcal{B''})| = |\partial^{(t)}(\partial^{(k-t-1)}(\mathcal{B''}))| \geqslant \binom{x}{t}$ holds.

We have $|\bb| \leqslant 2|\bb'| \leqslant c_1 \binom{x}{k-t-1}$ for some constant $c_1$. Combining this bound with Claim~\ref{cla16}, we get 
$$ c_1 \binom{x}{k-t-1} \geqslant |\bb| \geqslant \frac{|\partial^{(t)}(\mathcal{B''})|d}{2\binom{k}{t}} \geqslant  \frac{\binom{x}{t}d}{2\binom{k}{t}} =\frac{\binom{x}{t}\binom{n-k-t-1}{k-2t-1}}{2\binom{k}{t}}.$$
But $x^{k-t-1}=o(x^tn^{k-2t-1})$, so we arrive at a contradiction for sufficiently large $n$.

Now we can assume that the family $\bb''$ is not $t$-avoiding. In this case we can use Claim~\ref{cla12} and find a lot of edges inside $\bb''$. Indeed, Claim~\ref{cla12} gurantees  $\rho(\bb'') \geqslant \frac{c_2|\bb''|^2}{|\partial^{(t)}(\bb'')|}$ for some constant $c_2 > 0$. Clearly, $|\partial^{(t)}(\bb'')| \leqslant |\partial^{(t)}(\bb')|$, so by Claim~\ref{cla16} we have $$|\partial^{(t)}(\bb'')| \leqslant \frac{2\binom{k}{t}|\bb|}{d}.$$
Finally, we obtain
$$\rho(\bb'') \geqslant \frac{c_2|\bb''|^2}{|\partial^{(t)}(\bb'')|} \geqslant  \frac{c_2|\bb''|^2d}{2\binom{k}{t}|\bb|} \geqslant c_3|\bb|d$$
for some constant $c_3>0$. We have $|\bb| \geqslant r$ and $d=\Theta(n^{k-2t-1})$. The proof is completed.

\subsection{Two extremal constructions}\label{sec42}
We want to highlight that there are at least two different types of constructions with $\Theta (rn^{k-t-1})$ edges. This is one of the reason for the complications in the previous section. Put $\cA_1 = \{F : F \in \binom{[n]}{k}, [t+1] \subset F \}$ and $\cA_2 = \{F : F \in \binom{[n]}{k}, [t+2,2t+2] \subset F, [t+1] \cap F = \emptyset \}$. We say that a family of sets is a \textit{$s$-star}, if all sets in this family contain common $s$ elements (the set of these elements is called the center). Thus, $\cA_1$ and $\cA_2$ are $(t+1)$-stars with disjoint centers. The most natural construction with few edges consists of $\cA_1$ and an arbitrary subfamily of $\cA_2$ with $r$ sets. In this case, the corresponding subgraph contains exactly $r \cdot \binom{k}{t}\binom{n-k-t-1}{k-2t-1}$ edges. This example corresponds to the case in our proof, where we found the required number of edges between $\cA$ and $\bb$. Clearly, we can extend this example to $d=\Theta(n^{k-t-1})$. Simply take a constant number of entire $(t+1)$-stars with disjoint centers and a part of another one such $(t+1)$-star.

Now let us describe another type of construction. The family $\cA_1$ will form a bulk of this construction as well. Initially, find the smallest positive integer $x=x(n)$ such that 
\begin{align}
    \binom{x}{k} \geqslant r + \binom{x}{t}\binom{n}{k-2t-1}.    
\end{align}
Consider the families 
$$\cA_3= \binom{[t+2,t+1+x]}{k},$$ $$\cA_4=\{F : F \in \cG_1, |F \cap [t+2,t+1+x]| < t \}.$$
Clearly, $|\cA_3|=\binom{x}{k}$ and $|\cA_4| \geqslant \binom{n-t-1}{k-t-1} - \binom{x}{t}\binom{n}{k-2t-1}$. It is easy to see that $\rho(\cA_3) = \frac{1}{2} \binom{x}{k} \binom{k}{t} \binom{x-k}{k-t}$. Construct a family that contains $\cA_4$, as well as arbitrary $\ell-|\cA_4|$ sets from $\cA_3$. Thus, all induced edges are inside $\cA_3$. Therefore, the number of edges in our subgraph does not exceed $\frac{1}{2} \binom{x}{k} \binom{k}{t} \binom{x-k}{k-t}$. Suppose that $r \sim n^{\frac{k(k-2t-1)}{k-t}}$. We have $r=o(n^{k-t-1})$, since $k(k-2t-1) < (k-t)(k-t-1)$. In this case functions, the $\binom{x}{k}, r$ and $\binom{x}{t}\binom{n}{k-2t-1}$ all have the same order of growth, see (6). We obtain the required magnitude of the number of edges, since $ \binom{x}{k} \binom{x-k}{k-t} = \Theta(rn^{k-2t-1})$. This example corresponds to the case of our proof, when we found many edges inside $\bb$.

\subsection{Case $r=\Theta(n^{k-t-1})$}\label{sec43}

We now adapt the proof from Section~\ref{sec41} to the case of $r = \Theta(n^{k-t-1})$ by modifying the relevant parameters. 

Take a constant $\varepsilon > 0$ such that $\ell \geqslant (1 + \varepsilon)\binom{n-t-1}{k-t-1}.$ The first step of the proof is applying Theorem~\ref{thm8}. We stop the procedure that finds the families $\ff_i$, when the number of remaining sets is less than $\frac{\varepsilon}{2}\binom{n}{k-t-1}$. In this case, we get
$|\ff_i| \geqslant \frac{c(k)\varepsilon}{2} \binom{n}{k-t-1}$ for each $i \in [m]$. Therefore, we can upper bound $m$ by some constant. If some $\ff_i$ is of type 1, then by Claim~\ref{cla12} we have $\Omega(\frac{\ell^2}{n^t})$ edges inside $\ff_i$. Thus, we again have families of type 2 only.

The next essential difference is the choice of $h$ for Claim~\ref{cla14}. We weaken the inequality and require only
$$\sum_{i=1}^h |\cG_i| \geqslant \left(1+\frac{\varepsilon}{4}\right)\binom{n}{k-t-1}.$$
In this way we can upper bound $h$ by a sufficiently large constant. 

For Claim~\ref{peel} we can put $\tau(n)=\ln n$, which is enough to fulfill the conditions. We get a constant number of families $$\cG = \cG_1' \sqcup \cG_2' \sqcup \ldots \sqcup \cG_h',$$
satisfying the conditions (2)-(4) of Claim~\ref{peel} and such that $\sum_{i=1}^h|\cG_i'| \geqslant \left(1+\frac{\varepsilon}{4}\right)\binom{n}{k-t-1} - o(n^{k-t-1})$. As in previous proof, we have
\begin{align}\label{eq7}
\rho(\mathcal{G}_i' \sqcup \mathcal{G}_j') \geqslant \frac{\tau-k}{\tau} \sum_{\substack{X \subset [n] \setminus W,\\ |X|=t}} |\mathcal{G}_i'(X)||\mathcal{G}_j'(X)|.
\end{align}
From this point we can deviate from the previous proof and find a sufficient number of edges by double counting.

If we combine the bound (7) for all $i \neq j$ and $i,j \in [h]$, then we obtain

    $$\rho(\mathcal{F}) \geqslant \frac{\tau - k}{2\tau} \sum_{\substack{X \subset [n] \setminus W,\\ |X|=t}}\left(\left(\sum_{i \in [h]}|\mathcal{G}_i'(X)|\right)^2 - \sum_{i \in [h]}|\mathcal{G}_i'(X)|^2\right)$$

\begin{align}
=\frac{\tau - k}{2\tau} \sum_{\substack{X \subset [n] \setminus W,\\ |X|=t}}\left(|\cG(X)|^2 - \sum_{i \in [h]}|\mathcal{G}_i'(X)|^2\right).    
\end{align}

Every set in $\cG$ has exactly $k-t-1$ elements in $[n] \setminus W$, so
$$\sum_{\substack{X \subset [n] \setminus W,\\ |X|=t}} |\mathcal{G}(X)| = \binom{k-t-1}{t} |\cG|$$
holds.

Therefore, we have
$$\sum_{\substack{X \subset [n] \setminus W,\\ |X|=t}} |\mathcal{G}(X)|^2 \geqslant \cfrac{\left(\binom{k-t-1}{t} |\mathcal{G}|\right)^2}{\binom{n}{t}}$$
by the Cauchy-Schwarz inequality.

Note that for all $X \subset \binom{[n] \setminus W}{t}$ and $i \in [h]$ we have $|\mathcal{G}_i'(X)| \leqslant \binom{n}{k-2t-1}$. Thus, by convexity we obtain
$$\sum_{\substack{X \subset [n] \setminus W,\\ |X|=t}} \sum_{i \in [h]}|\mathcal{G}_i'(X)|^2 \leqslant \binom{n}{k-2t-1}  \binom{k-t-1}{t} |\mathcal{G}|.$$

Now we combine these bounds and get
$$\sum_{\substack{X \subset [n] \setminus W,\\ |X|=t}}\left(|\cG(X)|^2 - \sum_{i \in [h]}|\mathcal{G}_i'(X)|^2\right)$$
$$\geqslant \cfrac{\left(\binom{k-t-1}{t} |\mathcal{G}|\right)^2}{\binom{n}{t}} - \binom{n}{k-2t-1}  \binom{k-t-1}{t} |\mathcal{G}|$$

$$=  \binom{k-t-1}{t} |\mathcal{G}|\left(\cfrac{(k-t-1)!|\mathcal{G}|}{(k-2t-1)!n^t} - \cfrac{n^{k-2t-1}}{(k-2t-1)!} + o(n^{k-2t-1})\right)$$

$$\geqslant \binom{k-t-1}{t} |\mathcal{G}|\left(\cfrac{\varepsilon n^{k-2t-1}}{4(k-2t-1)!} + o(n^{k-2t-1})\right) = \Theta(n^{2k-3t-2}).$$

Substitute this into (8). Since $\frac{\tau - k}{\tau} \sim 1$, we get the claimed bound.

\section{Exact values of $\rho(l)$}\label{sec5}

\subsection{Proof of Theorem 6}\label{sec51}

Recall the notation $d =\binom{n-k-t-1}{k-2t-1}.$ Consider an arbitrary family $\mathcal{F} \subset\binom{[n]}{k}$ such that $|\mathcal{F}| = \ell$. Arguing indirectly, assume that $\rho(\mathcal{F}) < r\binom{k}{t}d.$ In the rest of the proof, we will assume that $n$ is sufficiently large. Running the proof of Theorem~\ref{thm5}, we can derive that there exists a set $C \in \binom{[n]}{t+1}$ such that $|\mathcal{F}_0(C)| \sim \ell$, otherwise $\rho(\ff) \gg rd$.

Let us use some notation from the last part of Section~\ref{sec41}. Denote $\mathcal{B} = \mathcal{F} \setminus \mathcal{F}[C]$, $\mathcal{A} = \mathcal{F}[C]$, $\mathcal{K} = \{A \in \binom{[n]}{k}: C \subset A \}$ and $\mathcal{A}' = \mathcal{K} \setminus \mathcal{A}$.

For each set $B \in \binom{n}{k} \setminus \mathcal{K}$ define 
$$\mathcal{N}_B = \{K \in \mathcal{K} : |K \cap B| = t \}.$$

If $B \cap C = \emptyset$, then
$|\mathcal{N}_B| = \binom{k}{t}d$, else $|\mathcal{N}_B| \gg n^{k-2t-1}$.

Since $|\cA'| = |\mathcal{B}| - r$,  for every $B \in \mathcal{B}$ there exist at least $\binom{k}{t}d - (|\mathcal{B}| - r )$ adjacent vertices in $\cA$. Thus, $$ r\binom{k}{t}d > \rho(\mathcal{F}) \geq |\mathcal{B}|\left(\binom{k}{t}d+r - |\mathcal{B}|\right).$$

Since $|\bb| \geqslant r$, the inequality valid only if $|\mathcal{B}| > \binom{k}{t}d \gg r.$ Just as in the proof of Theorem~\ref{thm5}, we denote $\bb'= \{ B : |\mathcal{N}_B \cap \cA| \leqslant \frac{d}{2}, B \in \bb  \}$. We have less than $r\binom{k}{t}d$ edges, so  $|\bb| < |\bb'| +2\binom{k}{t}r$ holds in our case. In particular, it means that $\bb'$ is not empty, and, moreover, forms a bulk of $\bb$ (for sufficiently large $n$).

Recall the corresponding step in the previous proof. Let us consider a subfamily $\bb'' \subset \bb'$ given by Theorem~\ref{thm8}. We consider two cases: whether the family $\bb''$ is $t$-avoiding or not.

If $\bb''$ is $t$-avoiding, then we arrive at the same contradiction as in the previous proof. If it is not $t$-avoiding, then we get
$$\rho(\bb'') \geqslant c_3 |\bb|d \gg rd.$$
Thus, we also arrive at a contradiction.

\subsection{One more construction}\label{sec52}

Let us construct a sequence of families $\ff \subset \binom{[n]}{k}$ such that $|\ff| = \binom{n-t-1}{k-t-1} + r(n)$, $r = \Theta(n^{k-2t-1})$ and $\rho(\ff) < r\binom{k}{t}\binom{n-k-t-1}{k-2t-1}=r\binom{k}{t}d$. Thus, we show that the order of growth of $r(n)$ in Theorem~\ref{thm6} can not be improved.

Take any function $r(n)$ such that $r(n) > \binom{n-2t-1}{k-2t-1}\left(\binom{k}{t} - 1\right)$ and $r(n) = o(n^{k-t-1})$.

Consider the family $\cG_1 = \{G \in \binom{[n]}{k} : [t+1] \subset G, [t+2, 2t+1] \nsubseteq G \}$. Clearly, we get $|\cG_1| = \binom{n-t-1}{k-t-1} - \binom{n-2t-1}{k-2t-1}$. Also, denote $\cG_2 = \{G \in \binom{[n]}{k} : [t+1] \cap G = \emptyset, [t+2, 2t+2] \subset G \}$ and choose an arbitrary $\cG_2' \subset \cG_2$ such that $|\cG_2'| = \binom{n-2t-1}{k-2t-1} + r(n)$. Indeed, it is possible, since $|\cG_2| = \Theta(n^{k-t-1})$.

Our example is the family $\ff = \cG_1 \sqcup \cG_2'$. Note that $\cG_1$ and $\cG_2'$ correspond to the independent sets in the graph $G(n,k,t)$. Therefore, we only need to count the edges between these families.

For each set $G \in \cG_2$, the family $\cG_1$ does not contain sets that intersect $G$ exactly in $[t+2,2t+1]$. Thus, each set $G \in \cG_2$ has exactly $$\left(\binom{k}{t} - 1\right)\binom{n-k-t-1}{k-2t-1} = \left(\binom{k}{t} - 1\right)d$$
edges to $\cG_1$. Hence, we see that
$$\rho(\ff) = \left(r + \binom{n-2t-1}{k-2t-1}\right)\left(\binom{k}{t} - 1\right)d < r\binom{k}{t}d,$$
since $r > \binom{n-2t-1}{k-2t-1}\left(\binom{k}{t} - 1\right)$.


\end{document}